\documentclass[12pt]{amsart}
\pagestyle{headings}
\usepackage[pagewise]{lineno}
\usepackage{lipsum}

\newcommand\blfootnote[1]{%
  \begingroup
  \renewcommand\thefootnote{}\footnote{#1}%
  \addtocounter{footnote}{-1}%
  \endgroup
}
\usepackage{cases}
\usepackage{float}
\usepackage{hyperref}
\usepackage{blindtext}
\usepackage{multicol}
\usepackage{graphics}
\usepackage{graphicx}
\usepackage{epstopdf}
\usepackage{amssymb}
\usepackage{amsmath}
\usepackage{amsthm}
\usepackage{array}
\usepackage{latexsym}
\usepackage{amsfonts}
\usepackage{color}
\usepackage{verbatim}
\usepackage[numbers]{natbib}
\usepackage{amsthm,url,xspace}
\definecolor{couleurCitations}{rgb}{0,0,0.85}
\definecolor{couleurRef}{rgb}{0.75,0,0}

\newtheorem{lema}{Lemma}
\newtheorem{teo}{Theorem}

\title{On non almost-fibered knots}
\author[Mario Eudave-Mu\~noz, Araceli Guzm\'an-Trist\'an, Enrique Ram\'irez-Losada]{Mario Eudave-Mu\~noz, Araceli Guzm\'an-Trist\'an, Enrique Ram\'irez-Losada}
\address{Instituto de Matem\'aticas, Universidad Nacional Aut\'onoma de M\'exico, Campus Juriquilla, Quer\'etaro, M\'exico}
\email{mario@matem.unam.mx}

\address{Centro de Investigaci\'on en Matem\'aticas, A.C. 402, 36000 Guanajuato, Gto., M\'exico}
\email{araceli.guzman@cimat.mx}

\address{Centro de Investigaci\'on en Matem\'aticas, A.C. 402, 36000 Guanajuato, Gto., M\'exico}
\email{kikis@cimat.mx}


\begin{document}  

\blfootnote{{\it 2020 Mathematics Subject Classification}.
   57K10, 57M12.}
    
\begin{abstract}
 An almost-fibered knot is a knot whose complement possesses a circular thin position in which there is one and only one weakly incompressible Seifert surface and one incompressible Seifert surface. Infinite examples of almost-fibered knots are known. In this article, we show the existence of infinitely many hyperbolic genus one knots that are not almost-fibered. 
\end{abstract}

\maketitle

\section{Introduction}

Let $K$ be a knot in $S^3$. A regular circle-valued Morse function on the knot complement $f:S^3\backslash K\to S^1$ induces a handle decomposition on the knot exterior. In \cite{Fabiola}, F. Manjarrez-Guti\'errez adapted the ideas of \cite{ST} to re-order the handles in such a way that the regular level surfaces are as simple as possible, giving rise to the definition of \emph{circular width} and \emph{circular thin position of the knot exterior}. Circular thin position of the knot exterior gives a sequence of Seifert surfaces which are alternately incompressible and weakly incompressible. In this context a \emph{fibered knot} is a knot whose exterior has a circular thin position with one and only one incompressible level surface and none weakly incompressible level surface. This is the unique circular thin position for a fibered knot (see \cite{Burde} or \cite{Whitten}). The circular width of a fibered knot is defined to be zero. An almost-fibered knot is a knot whose complement possesses a circular thin position in which there is one and only one weakly incompressible Seifert surface and one incompressible Seifert surface. Goda \cite{Goda} showed that all prime non-fibered knots up to ten crossings are almost-fibered knots. On the other hand, F. Manjarrez-Guti\'errez, V. N\'uñez and the third author \cite{FabKikPis} showed that all non-fibered knots which are free genus one, are almost-fibered knots.  It is worth mentioning that any non-fibered knot has a circular decomposition with only one thick level (see Subsection \ref{thinthick} for a definition), that is, any knot has a decomposition that looks like an almost-fibered knot, except that the thick surface may not be weakly incompresible, then the decomposition is not thin.

Until now, the only known examples of non almost-fibered knots are some connected sum of knots, for instance, the connected sum of $a$-small knots \cite{Fabiola2}. In 2017, at the Mathematical Congress of Americas, Hans Boden asked whether there are non almost-fibered, hyperbolic knots. In this paper we answer this question in the affirmative.

\begin{teo}
There exists an infinite family of genus one hyperbolic knots which are not almost-fibered. 
\end{teo}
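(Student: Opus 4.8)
The plan is to exhibit an explicit infinite family $\{K_n\}$ of knots, each presented together with a genus one Seifert surface $S_n$ built from a disk with two bands, where the two bands carry knotting and linking prescribed by the parameter $n$. The construction has two design goals. First, $S^3\setminus S_n$ should fail to be a handlebody, so that the free genus of $K_n$ exceeds one; by the contrapositive of \cite{FabKikPis} this is necessary, since every non-fibered free genus one knot is almost-fibered. Second, the two bands should be ``independent,'' so that the sutured manifold complementary to $S_n$ carries two separate and unmergeable sources of complexity. I would then record the elementary properties: $S_n$ is a once-punctured torus, so $K_n$ has genus at most one, and since the $K_n$ are nontrivial the genus is exactly one and $S_n$ is an incompressible minimal genus surface. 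Non-fiberedness I would detect from the Alexander polynomial, arranging that its top coefficient is not a unit; being non-monic, $\Delta_{K_n}$ then forces $K_n$ to be non-fibered, so the circular width is positive.

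The second block of work is to show each $K_n$ is hyperbolic. I would verify that $X(K_n)$ contains no essential sphere, annulus, or torus and is not a torus knot, so that Thurston's hyperbolization theorem applies. The delicate point is to complicate the two bands enough to push the free genus above one and to supply the two independent handles, while introducing no essential annulus or torus into the full exterior; the band tangles must therefore be chosen to be ``locally knotted'' in a controlled way. To guarantee infinitely many distinct knots I would exhibit an invariant growing with $n$, such as the hyperbolic volume or the span of the Alexander polynomial.

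The heart of the argument is that no $K_n$ is almost-fibered. I would first prove that $K_n$ has a unique minimal genus Seifert surface up to isotopy, so that $S_n$ is the only possible thin (incompressible) level in any circular thin position; this is where the precise choice of band tangles is used. Cutting $X(K_n)$ along $S_n$ produces a sutured manifold $N_n$, a cobordism from one copy of $S_n$ to another, and an almost-fibered structure would force a circular thin position of $X(K_n)$ with a single weakly incompressible level, equivalently a splitting of $N_n$ into two compression bodies along one strongly irreducible surface. The final step is to obstruct precisely this: I would show that, because the two bands contribute independent complexity, every circular thin position of $X(K_n)$ must contain at least two weakly incompressible level surfaces, since a single strongly irreducible surface cannot simultaneously resolve both bands. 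This is the hyperbolic, prime analogue of the connected-sum obstruction of \cite{Fabiola2}, with the two summands replaced by the two bands of a single genus one surface.

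The step I expect to be the main obstacle is exactly this lower bound: ruling out \emph{every} one-thick-level circular thin position, rather than merely exhibiting one complicated position. This demands genuine control over the sutured manifold decompositions of $N_n$; I would seek either an essential product annulus separating the regions associated to the two bands, or a sutured Floer homology computation, to certify that the handle structure of $N_n$ cannot be consolidated into a single thick level. Establishing the uniqueness of the minimal genus Seifert surface, on which this step rests, is the second place where the geometry of the chosen band tangles must be exploited carefully.
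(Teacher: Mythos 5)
The decisive step of your plan is missing, and what you would actually have to prove is stronger than what you state. Your obstruction --- that ``a single strongly irreducible surface cannot simultaneously resolve both bands'' --- is the entire difficulty, and you offer only candidate tools (an essential product annulus, or a sutured Floer homology computation) rather than an argument; in the paper this step is the long case analysis of Theorem \ref{principal}. More seriously, your plan has no analogue of Theorem \ref{familywidth}. To show $K_n$ is not almost-fibered you must show that no one-thick-level decomposition is \emph{thin}; since you never exhibit a competing decomposition of small width, you have no upper bound on the circular width, so your obstruction would have to exclude weakly incompressible thick surfaces of \emph{every} genus over $S_n$. The paper first constructs an explicit decomposition of width $\{3,3\}$, so that lexicographic minimality forces any putative almost-fibered structure to have width exactly $\{3\}$ --- a genus two thick level and a genus one thin level --- and only this bounded case needs to be (and is) excluded. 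Remember also that the converse of Theorem \ref{Inc} fails: a one-thick-level decomposition whose thick level is weakly incompressible need not be thin, so the all-genus nonexistence statement you are implicitly relying on is strictly stronger than non-almost-fiberedness and may simply be false for your knots.

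Two of your structural assumptions also do not do the work you assign to them. Uniqueness of the \emph{minimal genus} Seifert surface does not make $S_n$ ``the only possible thin level'': thin levels of a circular thin position are incompressible Seifert surfaces (Theorem \ref{Inc}), and incompressible Seifert surfaces need not realize the genus, so you would need uniqueness among all incompressible Seifert surfaces, of every genus. Even granting that, the dichotomy of Manjarrez-Guti\'errez quoted in Section \ref{prelim} implies that a non-fibered, non-almost-fibered knot with a unique incompressible Seifert surface must contain a closed essential surface in its complement; your construction neither produces such a surface nor explains how it coexists with the two-band picture, so your design goals are in unexamined tension. The paper's route is the opposite of uniqueness: the knots $K(\ell,m,n)$ are built as double branched covers precisely so that they admit \emph{four} disjoint, pairwise non-isotopic genus one Seifert surfaces, and the genus two handlebody regions between consecutive surfaces --- together with Valdez-S\'anchez's classification of companion annuli --- supply both the hyperbolicity (Lemma \ref{hyperbolic}) and the combinatorial frame in which the genus two thick surface is trapped and excluded; the statement that these four are the only genus one Seifert surfaces is Lemma \ref{4S}, which is the provable substitute for your uniqueness hypothesis.
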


The paper is organized as follows. In Section \ref{prelim} we review definitions about circular thin position of knots as well as definitions concerning to companion annuli in genus two handlebodies. In Section \ref{ejemplos} we give an explicit construction of the desired family of knots, and in Section \ref{width} we develop the proofs of the main results.

\section{Preliminaries}\label{prelim}

\subsection{Knots and surfaces.}

Let $K$ be a knot in $S^3$. The knot complement will be denoted by $C_K=S^3\backslash K$. A closed regular neighborhood of $K$ will be denoted by $\mathcal{N}(K)$ and the exterior of the knot $K$ by $E(K)=S^3\backslash \mathcal{N}(K)^{\mathrm{o}}$.

A Seifert surface for a knot $K$ is an oriented compact surface whose boundary is the knot $K$. It should be noted that such a surface always exists for any knot in $S^3$.



If $F$ is a two-sided surface, we say that $F$ is \emph{weakly incompressible} if any two compressing disks for $F$ on opposite sides of $F$ intersect along their boundary.

\subsection{Circular thin position for knots.}\label{thinthick}

In this section, much of the notation and definitions follow from F. Manjarrez-Gutiérrez \cite{Fabiola}.

Let $K\subset S^3$ be a knot. Let $F\colon S^3\backslash K\to S^1$ be a Morse function and define $f\colon E(K)\to S^1$ to be the restriction of $F$ to $E(K)$. A fundamental cobordism of $f$ can be isotoped to have no local maxima or minima \cite{Milnor}, we assume that all critical points of $f$ have index $1$ or $2$.

We construct a handle decomposition of $E(K)$ from $f$ as follows: Choose a Seifert surface $R$ at a regular level of $f$ between an index-1 and an index-2 critical points. There are many such 
choices: We assume $R$ is chosen to have smallest genus among all choices. If $f$ has no critical points, then $K$ is fibered and there is only one choice for $R$. Otherwise, we see that the 
critical points of $f$ define collections $N=\{N_{1},N_{2},\ldots,N_{k}\}$ and $B=\{B_{1},B_{2},\ldots,B_{k}\}$ of 1- and 2-handles, respectively. We assume the handles in $N_{1}$ appear first 
after $R$ and, moreover, that the handles in $N_{i}$ appear before the handles in $B_{i}$ and that the handles in $B_{i}$ appear before the handles in $N_{i+1}$ (taking all indices modulo $k$ where necessary). Construct the compact manifold

$$
H=(R\times I)\cup (N_{1}\cup B_{1})\cup\ldots\cup (N_{k}\cup B_{k})
$$
\noindent by flowing $R$ along $E(K)$ via the gradient of $f$, attaching handles from $N$ and $B$ as prescribed by the critical points of $f$. Choose a regular level $S_{i}$ separating $N_{i}$ from $B_{i}$; that is,

$$
S_{i}\cong\overline{\partial[(R\times I)\cup(N_{1}\cup B_{1})\cup\ldots\cup N_{i}]\backslash[\partial E(K)\cup (R\times \{0\})]}
$$ 

Call $S_{i}$ a \emph{thick level} of $f$ and set $\mathcal{S}=\bigcup_{i=1}^{k} S_{i}$, similarly choose a regular level $F_{i}$ separating $B_{i}$ from $N_{i+1}$; that is,

$$
F_{i}=\overline{\partial[(R\times I)\cup (N_{1}\cup B_{1})\cup\ldots\cup (N_{i}\cup B_{i})]\backslash [\partial E(K)\cup (R\times \{0\})]}
$$

Call $F_{i}$ a \emph{thin level} of $f$ and set $\mathcal{F}=\bigcup_{i=1}^{k} F_{i}$. 

We also define

$$
W_{i}=(\text{collar of } F_{i})\cup (N_{i}\cup B_{i})
$$

\noindent which is a $3$-manifold with boundary $\partial W_{i}=F_{i}\cup F_{i+1}\cup (W_{i}\cap\partial E(K))$. The thick level of $S_{i}$ defines a compact (but not closed) Heegaard surface for $W_{i}$, dividing it into compression bodies

$$
A_{i}=(\text{collar of } F_{i})\cup N_{i} \hspace{1cm}\text{and}\hspace{1cm} G_{i}=(\text{collar of } S_{i})\cup B_{i}
$$

The boundary $\partial A_{i}$ can be seen as the union of three components; that is, $\partial A_{i}=S_{i}\cup F_{i}\cup\partial_{v} A_{i}$,	where $\partial_{v} A_{i}=\partial A_{i}\cap\partial E(K)$. We call $\partial_{v} A_{i}$ the \emph{vertical boundary} of $A_{i}$. We can similarly define $\partial_{v} G_{i}=\partial G_{i}\cap\partial E(K)$ to be the vertical boundary of $G_{i}$. Note that the vertical boundary is an annulus.

Observe that $F_{k}$ is diffeomorphic to $R$. The function $f$ defines a diffeomorphism $\phi:R\to R$. When $K$ is fibered, $\phi$ is called the \emph{monodromy} of $K$. In this case, we see that 

$$
E(K)=H/(R\times\{0\})=\phi (R\times\{1\})
$$

The collection $\mathcal{D}=\{(W_{i};A_{i},G_{i})\}_{i=1}^{k}$ will be called the \emph{circular handle decomposition} of $E(K)$ induced by $f$.

\begin{figure}
\begin{center}
\includegraphics[scale=.4]{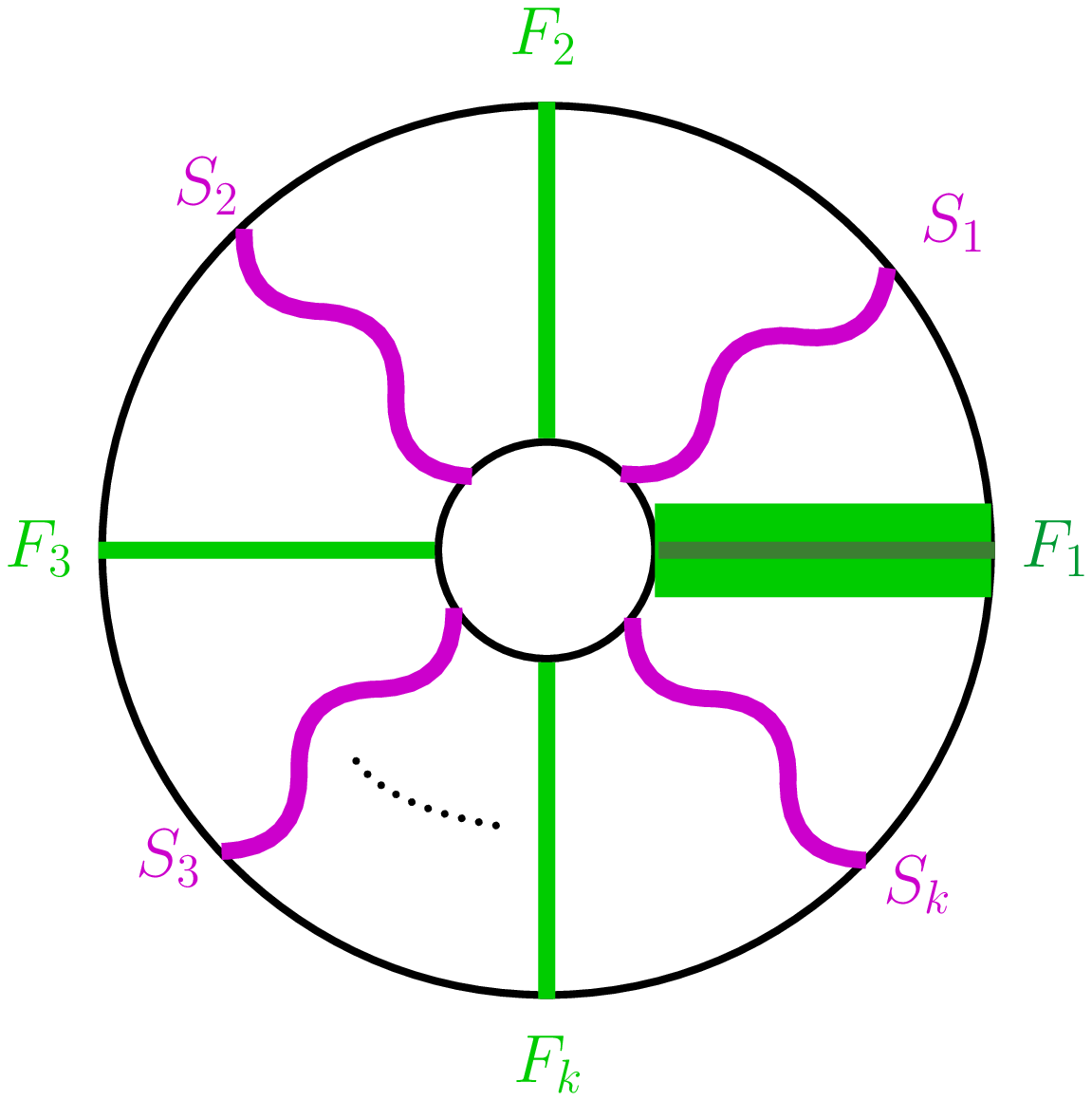} \end{center}
\caption{}
\label{}
\end{figure}

For a closed, connected surface $S\neq S^{2}$ define its \emph{complexity} $c(S)=1-\chi(S)$. If $S$ has nonempty boundary, define $c(S)=1-\chi(\overline{S})$, where $\overline{S}$ denotes $S$ with its  boundary components capped off with disks. We define $c(S^{2})=0$ and $c(D^{2})=0$. If $f$ is disconnected, define $c(S)=\sum c(S_{i})$, where $S=\bigcup_{i} S_{i}$ and each $S_{i}$ is connected.

For a knot $K\subset S^{3}$ with circular handle decomposition $\mathcal{D}$ for $E(K)$, the \emph{circular width} $cw(\mathcal{D})$ of the decomposition $\mathcal{D}$ is the multiset of integers $\{c(S_{i})\}_{i=1}^{k}$, ordered in a non-increasing way, and $|cw(\mathcal{D})|=k$ is the number of thick levels in $\mathcal{D}$. The \emph{circular width } $cw(E(K))$ \emph{of the knot exterior} $E(K)$ is defined to be the minimal circular width among all circular handle decompositions of $E(K)$. The minimum is taken using the lexicographic ordering of multisets of integers.

The pair $(E(K),\mathcal{D})$ is in \emph{circular thin position} if $\mathcal{D}$ realizes the circular width of $E(K)$. When $K$ is a fibered knot, we define $cw(E(K))=\emptyset$, so $|cw(E(K))|=0$. If $|cw(E(K))|=1$, we say that $K$ is \emph{almost-fibered}.

F. Manjarrez-Guti\'errez \cite{Fabiola} examined knot exteriors and showed the following results:

\begin{teo} [F. Manjarrez-Guti\'errez]
Let $K\subset S^{3}$ be a knot. At least one of the following holds:
\begin{itemize}
\item $K$ is fibered;
\item $K$ is almost-fibered;
\item $K$ contains a closed essential surface in its complement. Moreover, this closed essential surface is in the complement of an incompressible Seifert surface of $K$;
\item $K$ has at least two non-isotopic, incompressible Seifert surfaces.
\end{itemize}
\end{teo}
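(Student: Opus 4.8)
The plan is to run the Scharlemann--Thompson thin-position machinery of \cite{ST}, adapted to the circular setting as in \cite{Fabiola}, directly on a circular handle decomposition $\mathcal{D}=\{(W_i;A_i,G_i)\}_{i=1}^{k}$ that realizes the circular width $cw(E(K))$, and to read off the four alternatives from the number $k=|cw(E(K))|$ of thick levels. The first two bullets are then immediate from the definitions: if $k=0$ the knot is fibered, and if $k=1$ it is almost-fibered. Thus the entire content of the statement is to show that whenever $\mathcal{D}$ is thin with $k\geq 2$, one of the last two alternatives must occur.

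The technical heart, which I would establish first, is the pair of assertions that in a \emph{thin} decomposition every thin level $F_i$ is incompressible and every thick level $S_i$ is weakly incompressible. Both are proved by contradiction through width-reducing rearrangements of the handles. If some thin level $F_i$ admitted a compressing disk, compressing along it and reorganizing the adjacent handles of $N_i\cup B_i$ would lower the complexity $c(F_i)$ and hence strictly decrease $cw(\mathcal{D})$ in the lexicographic multiset order, contradicting thinness. Likewise, if a thick level $S_i$ admitted disjoint compressing disks on opposite sides, i.e.\ failed to be weakly incompressible, then the corresponding handles could be reordered and cancelled, splitting $W_i$ into simpler pieces and again strictly reducing the width. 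I expect the \textbf{main obstacle} to be carrying these moves out carefully in the \emph{cyclic} decomposition: one must respect the indices taken modulo $k$ and the monodromy gluing $R\times\{0\}=\phi(R\times\{1\})$, so that one cannot simply push critical points off to one end as in the linear, $\mathbb{R}$-valued theory. This is precisely the step where the circular adaptation does its real work, and it is where I would spend most of the effort.

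With these two facts in hand, assume $k\geq 2$. Then $\mathcal{D}$ has at least two thin levels $F_1,\dots,F_k$, each of which is an incompressible Seifert surface for $K$. If two of them, say $F_i$ and $F_j$, are non-isotopic, then $K$ has at least two non-isotopic incompressible Seifert surfaces and the fourth bullet holds. It therefore remains to treat the case in which all thin levels are mutually isotopic, in particular each isotopic to the minimal-genus surface $R=F_k$. Here I would take two consecutive thin levels $F_i$ and $F_{i+1}$, glue them along the annulus they cobound in $\partial E(K)$ between their boundary longitudes, and push the result slightly into the interior to obtain a closed surface $\widehat{F}$ that is disjoint from $R$ (choosing $R$ to be a thin level distinct from the two used).

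Finally I would argue that $\widehat{F}$ is essential, which is the delicate point of this last case. Its incompressibility should follow from the incompressibility of the thin levels $F_i,F_{i+1}$ together with the weak incompressibility of the thick level $S_i$ sitting in the cobordism $W_i$ between them: any compressing disk for $\widehat{F}$ would either yield a compressing disk for one of the incompressible thin levels or produce a pair of disjoint compressing disks on opposite sides of $S_i$, both impossible. Its failure to be boundary-parallel should follow from the fact that $W_i$ is not a product, since thinness forbids cancelling the $1$- and $2$-handles it contains. Hence $\widehat{F}$ is a closed essential surface lying in the complement of the incompressible Seifert surface $R$, which is the third bullet, completing the dichotomy.
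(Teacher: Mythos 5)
You should first be aware that this paper does not prove the statement at all: it is quoted as background from \cite{Fabiola}, so there is no proof here to compare against, and your proposal has to be judged against the argument in that reference. Your skeleton is the natural (and, in outline, the standard) one: circular thin position, $k=0$ and $k=1$ giving the first two bullets by definition, the facts that in a thin decomposition thin levels are incompressible and thick levels weakly incompressible (these are exactly the paper's quoted Theorem 2, so using them as ingredients is legitimate), and then the dichotomy between two non-isotopic thin levels (fourth bullet) and all thin levels isotopic.

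The genuine gap is in the final case, in your claim that the closed surface $\widehat F$ (two consecutive thin levels $F_i$, $F_{i+1}$ tubed along the vertical annulus of $W_i$ and pushed off) is incompressible on the $W_i$ side because a compressing disk there ``would produce a pair of disjoint compressing disks on opposite sides of $S_i$.'' This relative Casson--Gordon assertion fails in the circular setting: a compressing disk for $\widehat F$ inside $W_i$ can have boundary running over the annulus part of $\widehat F$, crossing between the copies of $F_i$ and $F_{i+1}$, and such a disk yields neither a compressing disk for a thin level nor a weak reduction of $S_i$. Concretely, for a non-fibered free genus one knot (the class treated in \cite{FabKikPis}, e.g. $5_2$), the exterior cut along the incompressible genus one surface $R$ is a genus two handlebody carrying a weakly incompressible thick level, yet the double of $R$ --- precisely your surface $\widehat F$, in the one-thick-level situation --- is compressible in that handlebody; the same phenomenon occurs inside an individual region $W_i$ when $k\geq 2$ (e.g. for connected sums of such knots, whose regions between consecutive thin levels are handlebodies, and which satisfy the theorem through the fourth bullet, via non-isotopic Seifert surfaces, not the third). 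Note that your essentiality argument never invokes the standing hypothesis of this case --- that all thin levels are isotopic --- and it must, since without it the claim is simply false by these examples. Closing the gap requires a genuinely different idea, for instance Waldhausen's product-region theorem (disjoint isotopic incompressible Seifert surfaces cobound a product, so consecutive isotopic thin levels would allow the handles of $W_i$ or of its complement to be cancelled, violating thinness), or the observation that level surfaces of a circle-valued Morse function may be disconnected, so that the closed essential surface of the third bullet appears as a closed component of a thin level, which is also how the ``moreover'' clause arises. A smaller slip: when $k=2$ there is no ``thin level distinct from the two used'' to serve as $R$; instead push $\widehat F$ into the interior of $W_i$, which makes it disjoint from $F_i$ itself.
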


\begin{teo} [F. Manjarrez-Guti\'errez]\label{Inc}
If $(E(K),\mathcal{D})$ is in circular thin position, then
\begin{itemize}
\item Each Heegaard splitting $S_{i}$ of $W_{i}$ is strongly irreducible.
\item Each $F_{i}$ is incompressible in $E(K)$.
\item Each $S_{i}$ is weakly incompressible surface in $E(K)$.
\end{itemize}
\end{teo}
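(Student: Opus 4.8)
The plan is to prove all three conclusions by a single \emph{width-minimization} argument in the spirit of Scharlemann--Thompson \cite{ST} and its circular adaptation in \cite{Fabiola}: assuming that $(E(K),\mathcal{D})$ realizes $cw(E(K))$, I will show that the failure of any one of the three conclusions yields a rearrangement of the handle decomposition whose circular width is strictly smaller in the lexicographic order, a contradiction. The basic move is \emph{untelescoping}: if a thick level $S_i$ is weakly reducible inside $W_i$, the commuting one- and two-handles can be reordered so that $W_i$ is cut along a new thin level into two cobordisms, replacing $S_i$ by two thick levels each of strictly smaller complexity. Since the multiset $\{c(S_i)\}$ is ordered non-increasingly and compared lexicographically, deleting one value $c(S_i)$ and inserting two strictly smaller values always lowers the width (the first coordinate at which the two sorted multisets differ drops); this is the engine of every contradiction below.

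For the first bullet, suppose some $S_i$ is \emph{not} strongly irreducible as a Heegaard surface of $W_i$. Then there are compressing disks $D_A\subset A_i$ and $D_G\subset G_i$ on opposite sides with $\partial D_A\cap\partial D_G=\emptyset$ on $S_i$. Here $D_A$ is dual to a one-handle of $N_i$, while $\partial D_G$ is the attaching curve of a two-handle of $B_i$; the disjointness of the boundaries means exactly that this two-handle does not run over that one-handle, so the corresponding critical points of $f$ may be reordered, attaching the two-handle first. This untelescopes $W_i$: a new thin level appears and $S_i$ is replaced by thick levels $S_i',S_i''$, each obtained from $S_i$ by a nonempty compression, so $c(S_i'),c(S_i'')<c(S_i)$. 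The resulting circular width is strictly smaller, contradicting thinness. Hence every $S_i$ is strongly irreducible.

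For the second bullet, suppose a thin level $F_i$ is compressible in $E(K)$, with compressing disk $D$. Because $F_i=\partial_-A_i=\partial_-G_{i-1}$ and the negative boundary of a compression body is incompressible, $D$ lies in neither $A_i$ nor $G_{i-1}$, so its interior meets the union $\mathcal{S}$ of thick levels. Isotope $D$ to minimize $|D\cap\mathcal{S}|$ and take an innermost circle of $D\cap\mathcal{S}$ on $D$, cutting off a subdisk that compresses some thick level $S_j$. Using the strong irreducibility of $S_j$ just established, together with minimality of $|D\cap\mathcal{S}|$, these intersection circles can be removed one at a time, eventually isotoping $D$ into a single compression body and contradicting the incompressibility of its negative boundary. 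Hence every $F_i$ is incompressible.

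Finally, the third bullet follows from the first two. Let $D_1,D_2$ be compressing disks for $S_i$ in $E(K)$ on opposite sides. Since $\partial D_1,\partial D_2\subset S_i$ are disjoint from the neighbouring thin levels $F_i$ and $F_{i+1}$, which are incompressible by the second bullet, each circle of $D_j\cap(F_i\cup F_{i+1})$ may be removed by innermost-disk surgery without moving $\partial D_j$; this isotopes $D_1$ into $A_i$ and $D_2$ into $G_i$, so both become compressing disks for $S_i$ inside $W_i$ on opposite sides, and strong irreducibility of the Heegaard splitting of $W_i$ forces $\partial D_1\cap\partial D_2\neq\emptyset$, i.e. $S_i$ is weakly incompressible in $E(K)$. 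I expect the real difficulty to sit in the second bullet: controlling the global disk $D$ as it threads several thick levels and arranging the innermost-disk exchanges so that $|D\cap\mathcal{S}|$ strictly drops while respecting strong irreducibility; the circular indexing further forces all reductions to be tracked modulo $k$, and one must check in each move that the rearranged decomposition is again a legitimate circular handle decomposition of strictly smaller lexicographic width.
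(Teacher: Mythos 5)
A preliminary remark: the paper itself gives no proof of this statement --- it is quoted, with attribution, as a result of F. Manjarrez-Guti\'errez \cite{Fabiola} --- so your proposal can only be measured against the argument given there, which adapts Scharlemann--Thompson \cite{ST} together with a Casson--Gordon-type lemma to the circular setting. Measured against that, your first and third bullets follow the correct route. For the first, untelescoping a weakly reducible $S_i$ and observing that replacing $c(S_i)$ by two strictly smaller entries lowers the lexicographically ordered width is exactly the right engine (though identifying the two disks with a single dual $1$-handle disk and a single $2$-handle attaching curve is a simplification: one works with disjoint families of compressing disks and handle slides, and must check that the rearrangement is again a legitimate circular decomposition). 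For the third, surgering $D_1$ and $D_2$ along innermost circles of their intersection with the thin levels (all of $\mathcal{F}$, not just $F_i\cup F_{i+1}$: a compressing disk for $S_i$ in $E(K)$ may wander through every $W_j$), keeping their boundaries fixed, does place them in $A_i$ and $G_i$, and so reduces weak incompressibility in $E(K)$ to strong irreducibility of the splitting of $W_i$. One must also say a word about the relative setting --- all levels are Seifert surfaces with boundary on $\partial E(K)$, so the vertical annuli have to be tracked --- but that is routine.

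The genuine gap is in the second bullet, at its central step. After you minimize $|D\cap\mathcal{S}|$, an innermost circle of $D\cap\mathcal{S}$ on $D$ bounds a subdisk $D'$ whose boundary is \emph{essential} in some thick level $S_j$ (inessential circles are eliminated by minimality and irreducibility). Such a $D'$ is simply a compressing disk for $S_j$ on one side, and it cannot be ``removed one at a time'': there is no isotopy eliminating an essential intersection circle, and strong irreducibility does not forbid individual compressing disks --- compression bodies have plenty of them --- it only forbids \emph{disjoint pairs on opposite sides}. So your process never terminates with $D$ inside a single compression body, and the contradiction you aim at (incompressibility of the negative boundary of a compression body) is unreachable. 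The correct argument runs the other way around: choose $D$ among compressing disks for \emph{all thin levels} minimizing $|D\cap\mathcal{F}|$; innermost-circle surgery against the thin (not thick) levels makes $D\cap\mathcal{F}=\partial D$, so $D$ lies in a single $W_j$ and compresses $\partial_- W_j$ inside $W_j$; then one invokes the Casson--Gordon-type lemma that a Heegaard splitting of a manifold whose negative boundary is compressible must be weakly reducible --- from the pieces of $D$ cut along $S_j$ one extracts disjoint essential disks on opposite sides of $S_j$ --- contradicting your first bullet. The missing ingredient is precisely that lemma, in its relative circular form as established in \cite{Fabiola}; without it your second bullet, and hence the theorem, is not proved.
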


The converse of this theorem is not true in general. That is, a circular handle decomposition satisfying the three properties above need not be thin.

\subsection{Companion annuli in genus two handlebodies.} 
In this section much of the definitions and notations follow from L. Valdez-S\'anchez \cite{Tejano}.

Let $M$ a $3$-manifold with boundary and $\gamma\subset\partial M$ a circle which is non-trivial in $M$. We say that a separating annulus $A$ properly embedded in $M$ is a \emph{companion annulus of} $\gamma$ if $A$ is not parallel into $\partial M$ and the circle components of $\partial A$ cobound an annulus $A_{\gamma}\subset\partial M$ with core isotopic to $\gamma$ in $\partial M$. If the region cobounded by $A$ and $A_{\gamma}$ in $M$ is a solid torus $V$, we say that $V$ is a \emph{companion solid torus of} $\gamma$ in $M$ and denote the components of $M\backslash A$ by $M_{A}$ and $V$.


Let $H$ be a genus two handlebody and $\gamma ,\gamma'\subset\partial H$ be mutually disjoint and non-parallel circles. We say that
\begin{itemize}
\item $\gamma$ is a \emph{primitive circle} in $H$ if $\gamma$ represents a primitive element in the free group $\pi_{1}(H)$; geometrically, this is equivalent to the presence of a disk in $H$ which intersects $\gamma$ minimally in one point.
\item $\gamma$ is a \emph{power circle} in $H$ if $\gamma$ represents a non-trivial power in $\pi_{1}(H)$, that is, if $\gamma$ represents a power $p\geq 2$ of some non-trivial element in $\pi_{1}(H)$.
\end{itemize}
In the part $2$ of Lemma $3.3$ of \cite{Tejano}, Valdez-S\'anchez prove the next

\begin{lema}\label{tejano2}
Let $\gamma\subset\partial H$ a circle which is non-trivial in $H$. Then $\gamma$  has a companion annulus in $H$ iff $\gamma$ is a power circle in $H$; more precisely,
\begin{itemize}
\item the companion annulus $A$ of $\gamma$ is unique up to isotopy and cobounds with $\partial H$ a companion solid torus of $\gamma$, of whose core $\gamma$ represents a non-trivial power in $\pi_{1}(H)$.
\item $H\backslash A$ consists of  a genus two handlebody $H_{A}$ and a solid torus, and the core of $A$ is a primitive circle in $H_{A}$.

\end{itemize}
\end{lema}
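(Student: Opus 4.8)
The plan is to run both implications through a single auxiliary object obtained by capping off the companion annulus. Given a companion annulus $A$, let $T=A\cup A_\gamma$ be the closed torus embedded in $H$, let $V$ be the component of $H\setminus A$ bounded by $T$ (the candidate companion solid torus), and let $H_A$ be the other component. Because $\pi_1(H)\cong F_2$ is free it contains no $\mathbb{Z}^2$, so $H$ has no incompressible torus; hence $T$ must admit a compressing disk $D$ in $H$. The entire argument is driven by locating $D$ and reading off the slope of $\partial D$ on $T$.

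For the forward implication I would first normalize $\partial D$ on $T$. On $T$ the core of $A$ and $\gamma$ (the core of $A_\gamma$) are isotopic, since both are parallel to the common boundary $\partial A=\partial A_\gamma$. The disk $D$ cannot have $\partial D$ of that slope: otherwise $\gamma$ would bound a disk in $H$, contradicting that $\gamma$ is non-trivial (equivalently, this would be a compression of the essential annulus $A$). So $\partial D$ must cross $\gamma$. But $D$ cannot lie on the $H_A$ side, because $T\cap\partial H_A=A$ carries only curves isotopic to the core of $A$; hence $D\subset V$ and meets $\gamma$ transversally. Irreducibility of $H$ then forces $V$ to be a solid torus with meridian $\partial D$; writing $\delta$ for its core, $\gamma$ runs as a $(p,q)$-curve on $\partial V$ with longitudinal winding $p\ge 1$, so $\gamma$ is freely homotopic to $\delta^{p}$ in $H$. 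If $p=1$ then $A$ would be parallel through $V$ to $A_\gamma$, i.e. boundary-parallel, contradicting essentiality; hence $p\ge 2$ and $\gamma$ is a power circle, with companion solid torus $V$ as claimed.

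For the structural and converse statements I would proceed algebraically and geometrically in tandem. Van Kampen gives $\pi_1(H)=\pi_1(H_A)*_{\langle c\rangle}\langle\delta\rangle$ with $c\mapsto\delta^{p}$, where $c$ is the core of $A$; since the factors of an amalgam inject into the free group $F_2$, $\pi_1(H_A)$ is free, and an Euler-characteristic count ($\chi(H_A)=\chi(H)=-1$) pins its rank at $2$. Together with irreducibility and the compressibility of $\partial H_A$ (a genus-two surface group cannot inject into $F_2$), the standard loop-theorem induction shows $H_A$ is a genus-two handlebody. Primitivity of $c$ in $H_A$ is then forced by freeness of the amalgam: were $c$ a proper power, or otherwise non-primitive, the relation $c=\delta^{p}$ with $p\ge 2$ would introduce either a non-free torus-knot-type amalgam $\langle e,\delta\mid e^{m}=\delta^{p}\rangle$ or torsion in $H_1(H)$, contradicting $\pi_1(H)\cong F_2$. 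For the converse, given $\gamma\simeq\delta^{p}$ I would realize the root $\delta$ by the core of an embedded solid torus $V\subset H$ on whose boundary $\gamma$ sits with winding $p$, and take $A=\overline{\partial V\setminus A_\gamma}$; essentiality of $A$ follows since $p\ge 2$ prevents it from being boundary-parallel.

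The main obstacle is twofold. The delicate existence step is the geometric realization in the converse: producing an \emph{embedded} solid torus whose core represents the prescribed root $\delta$ and whose boundary carries $\gamma$ with the correct winding, rather than merely an immersed (singular) annulus; this is where one must work with $\gamma$ as a curve on $\partial H$ together with a meridian disk system and upgrade an algebraic power to an embedded companion. Uniqueness up to isotopy is the second point: two companion annuli $A,A'$ yield solid tori whose cores represent roots of the same element $[\gamma]\in F_2$, so by uniqueness of roots in free groups the winding $p$ and the conjugacy class of $\delta$ agree; a cut-and-paste argument on $A\cap A'$ (innermost disks and outermost arcs, using incompressibility of the annuli and irreducibility of $H$) then removes all intersections and exhibits the desired isotopy.
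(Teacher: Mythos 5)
The first thing to note is that the paper offers no proof to compare against: this lemma is imported verbatim from part 2 of Lemma 3.3 of Valdez-S\'anchez \cite{Tejano}, so your attempt has to stand on its own merits. Within it, the forward implication (companion annulus implies power circle) is essentially sound: the torus $T=A\cup A_{\gamma}$ must compress because no closed surface in a handlebody is incompressible; the compressing disk is forced into $V$, since a disk on the $H_{A}$ side would have boundary an essential curve of the annulus $A$ and hence make $\gamma$ null-homotopic; irreducibility then makes $V$ a solid torus; and $p=1$ is correctly excluded because $A$ would then be parallel to $A_{\gamma}$. The van Kampen splitting $\pi_{1}(H)=\pi_{1}(H_{A})*_{\langle c\rangle}\langle\delta\rangle$ and the identification of $H_{A}$ as a genus two handlebody (irreducible, free fundamental group, $\chi=-1$, plus the standard loop-theorem result you invoke) are also fine.

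There are, however, two genuine gaps. The larger one is that the converse --- every power circle has a companion annulus --- is never proved: you explicitly set aside ``producing an embedded solid torus'' as ``the main obstacle.'' That existence statement is the substantive half of the ``iff''; promoting the algebraic equation $\gamma\simeq\delta^{p}$ in a free group to an \emph{embedded} annulus is exactly where the content of Valdez-S\'anchez's lemma lies, and a writeup that defers it proves only one direction (the same applies to uniqueness, which you only sketch). The second gap is that your primitivity argument fails as written: a non-primitive element of $\pi_{1}(H_{A})\cong F(a,b)$ need not be a proper power, nor need it create torsion in $H_{1}(H)$. Take $c=a^{2}b^{3}$: it is not primitive, it is not a proper power, and its abelianization $(2,3)$ makes $H_{1}(H)$ torsion-free for every $p$, so neither your torus-knot amalgam nor your homology alternative rules it out, even though $F(a,b)*_{c=\delta^{p}}\langle\delta\rangle$ is in fact non-free. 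The tool you actually need here is Shenitzer's lemma (see also Swarup): if an amalgamated product of free groups over an infinite cyclic subgroup is free, then the edge group is a free factor of one of the vertex groups. Since $\langle\delta^{p}\rangle$ with $p\geq 2$ is not a free factor of $\langle\delta\rangle$, freeness of $\pi_{1}(H)$ forces $\langle c\rangle$ to be a free factor of $\pi_{1}(H_{A})$, which is precisely primitivity of the core of $A$. With that substitution, and with the converse and the uniqueness cut-and-paste actually carried out, your outline could be completed; as it stands it does not establish the lemma.
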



\section{Knots with four genus one Seifert surfaces}\label{ejemplos}
\(\)
Suppose that $K\subset S^3$ is a genus one knot and $\mathbb{T}=T_1\sqcup T_2\sqcup T_3\sqcup T_4$ is a collection of four mutually disjoint and non-parallel genus one Seifert surfaces for $K$, where the $T_i$'s are labeled consecutively around $\partial\mathcal{N}(K)$ following some fixed orientation of the meridian slope $\mu\subset\partial\mathcal{N}(K)$. 
For $i\in\{1,\ldots, 4\}$, denote by $H_i$ the region in $E(K)$ cobounded by $T_i$ and $T_{i+1}$.
For each $i=1,\ldots, 4$, set $\partial H_i=T_i\cup {J_i}\cup T_{i+1}$ (mod $4$) with $J_i\subset\partial\mathcal{N}(K)$ an annulus whose core is parallel to $K$. See Figure \ref{descK}.

\begin{figure}
\begin{center}
\includegraphics[scale=.4]{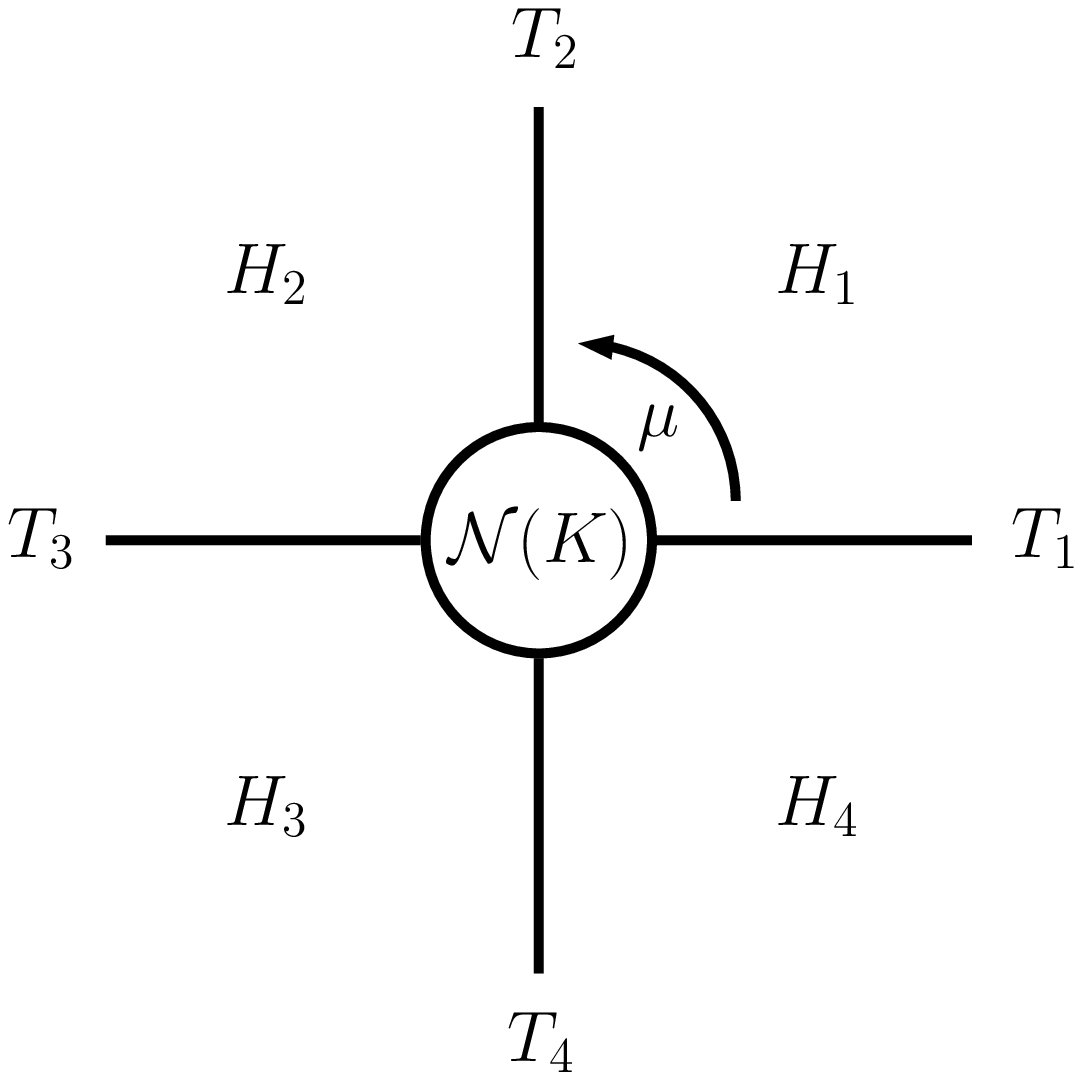}    
\end{center}
\caption{}
\label{descK}

\end{figure}

Assume that  $H_i$ satisfy the following conditions:

\begin{itemize}
    \item Each of the regions $H_i$ is a genus two handlebody.
    \item There is a power circle $\omega_i\subset T_i$ in $H_i$ with companion annulus $C_i\subset H_i$, such that the circles $\partial C_i$ cobound an annulus $\overline {C}_i\subset T_i$, and such that the once-punctured torus obtained by  isotoping $(T_i\backslash \overline{C}_i)\cup C_i$ slightly off $T_i$ is parallel to $T_{i+1}$. 
    \item $\omega_i$ represents a power $p\geq 3$ of some non-trivial element in $\pi_{1}(H)$.
    \item Let $\omega_i'$ be a parallel copy of the core of the annulus $C_i$ in $T_{i+1}$. Then $\omega_i'$ and $\omega_{i+1}$ intersect transversely in one point.
\end{itemize}

Note that the curve $\omega_i'$ in $T_{i+1}$ is a power circle in $H_i$, with companion annulus $C_i'$, such that $C_i\cap C_i'$ intersect in two simple closed curves.

\begin{teo} \label{familyknots} There is an infinite family of genus one knots $K(\ell,m,n)$ that satisfy all the above properties. \end{teo}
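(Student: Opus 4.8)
The plan is to exhibit the family through an explicit surface-and-band construction and then to verify the four itemized conditions one at a time, reading each off the model. First I would fix a concrete building block: a once-punctured torus $T$ presented as a disk with two bands, in which the integers $\ell,m,n$ prescribe the number of full twists inserted in the two bands and in a clasping region, and set $K=K(\ell,m,n)=\partial T_1$ with $T_1=T$. To generate the remaining surfaces I would use exactly the cut-and-paste move of condition (2): given a power circle $\omega_i\subset T_i$ with companion annulus $C_i$, replace the sub-annulus $\overline{C}_i\subset T_i$ by $C_i$ and push slightly off $T_i$ to obtain a disjoint once-punctured torus, declared to be $T_{i+1}$. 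Designing the template to carry a cyclic symmetry so that four iterations of this move close up produces the mutually disjoint, non-parallel, cyclically ordered family $T_1\sqcup T_2\sqcup T_3\sqcup T_4$ together with the intermediate regions $H_i$.

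Next I would check that each $H_i$ is a genus two handlebody. Since $\chi(T_i)=\chi(T_{i+1})=-1$ and $J_i$ is an annulus, $\partial H_i$ is automatically a closed genus two surface, so it is enough to produce a complete meridian system: two disjoint properly embedded disks cutting $H_i$ into a ball. These disks are visible directly in the band picture as co-cores of the bands, so I expect this to be a uniform diagrammatic verification valid for all admissible parameter values.

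For the power-circle data I would take $\omega_i\subset T_i$ to be the core of the annular region moved by the cut-and-paste and compute its class in $\pi_1(H_i)$ relative to the free basis read from the meridian system; choosing the twisting parameter so that $[\omega_i]$ is a $p$-th power with $p\geq 3$ makes $\omega_i$ a power circle. Lemma \ref{tejano2} then supplies the unique companion annulus $C_i$, and the move is arranged precisely so that regluing $(T_i\setminus\overline{C}_i)\cup C_i$ and pushing off yields a once-punctured torus parallel to $T_{i+1}$, which is condition (2). Finally I would push the core of $C_i$ into $T_{i+1}$ to obtain $\omega_i'$ and compute the geometric intersection number of $\omega_i'$ with $\omega_{i+1}$ inside the torus $T_{i+1}$ directly from the model, arranging it to equal one, which is condition (4).

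The main obstacle is forcing all of these requirements to hold simultaneously and consistently around the four-fold cycle. The handlebody condition, the demand that $\omega_i$ be a genuine $p$-th power with $p\geq 3$ (a primitive circle would, by Lemma \ref{tejano2}, admit no companion annulus at all), the parallelism of the reglued surface to $T_{i+1}$, and the single-point intersection $\omega_i'\cdot\omega_{i+1}=1$ all constrain the same twisting and clasping parameters, so the delicate point is to arrange the dependence of the template on $(\ell,m,n)$ so that every condition is met at each of the four junctions at once. I would also confirm that the ambient manifold is $S^3$, so that $K$ is genuinely a knot, and that distinct admissible triples $(\ell,m,n)$ give infinitely many knots, securing the word \emph{infinite} in the statement.
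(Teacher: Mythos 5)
Your proposal is a plan rather than a proof: at no point do you actually exhibit the disk-with-bands template, the meridian disks, the classes $[\omega_i]\in\pi_1(H_i)$, or the four-fold closing-up of the cut-and-paste move; each of these is deferred to a verification to be ``read off the model'' of a model that is never written down. The difficulty you yourself flag --- making the handlebody condition, the power condition $p\geq 3$, the parallelism of $(T_i\setminus\overline{C}_i)\cup C_i$ to $T_{i+1}$, and the one-point intersection $\omega_i'\cdot\omega_{i+1}=1$ hold simultaneously at all four junctions, with the cycle closing up so that the move applied to $T_4$ returns a surface parallel to $T_1$ --- is exactly the content of the theorem, and it is not resolved by postulating ``a template carrying a cyclic symmetry.'' There is also a structural circularity in your inductive scheme: $\omega_i$ is required to be a power circle \emph{in $H_i$}, but $H_i$ is the region cobounded by $T_i$ and $T_{i+1}$, which in your scheme does not exist until the move at stage $i$ has already been performed; so the data needed to define the move is only available after the move is done. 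Untangling this is a genuine piece of work, not bookkeeping.

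For comparison, the paper avoids all of this by working in a double branched cover. It takes the two-component link $K'\cup L$ of Figure \ref{generalizacion} (both components unknots, with twist regions $\ell,m,n$), four disks $D_1,\ldots,D_4$ bounded by $K'$, each meeting $L$ transversely in three points, and the double cover $p\colon S^3\to S^3$ branched over $L$. Then $K=p^{-1}(K')$ is a knot because the linking number of $K'$ and $L$ is odd, the surfaces $T_i=p^{-1}(D_i)$ are the four genus one Seifert surfaces, and each $H_i$ is a genus two handlebody because the region between consecutive disks is a trivial $3$-string tangle in a ball. The companion annuli, the parallelism condition, and the one-point intersection condition all become statements about explicitly drawn disks and curves in the base, lifted through $p$; in particular $\omega_i$ visibly wraps $\ell$, $m$ or $n$ times around a lifted solid torus, which is precisely where the hypothesis $\vert\ell\vert,\vert m\vert,\vert n\vert\geq 3$ enters. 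To salvage your direct approach you would have to produce the explicit template and check all four conditions at all four junctions; the branched-cover formulation is the device that makes those simultaneous checks tractable, and your proposal has no substitute for it.
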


\begin{proof} Let $K'\cup L$ be the link shown in Figure \ref{generalizacion}, where $\ell,m,n$ are integers. The  box labelled by $m$ stands for a sequence of $m$ horizontal crossings. Assume that $\vert m \vert, \vert n \vert, \vert \ell \vert \geq 3$. Note that both $K'$ and $L$ are trivial knots. 

\begin{figure}
\begin{center}
\includegraphics[scale=.35]{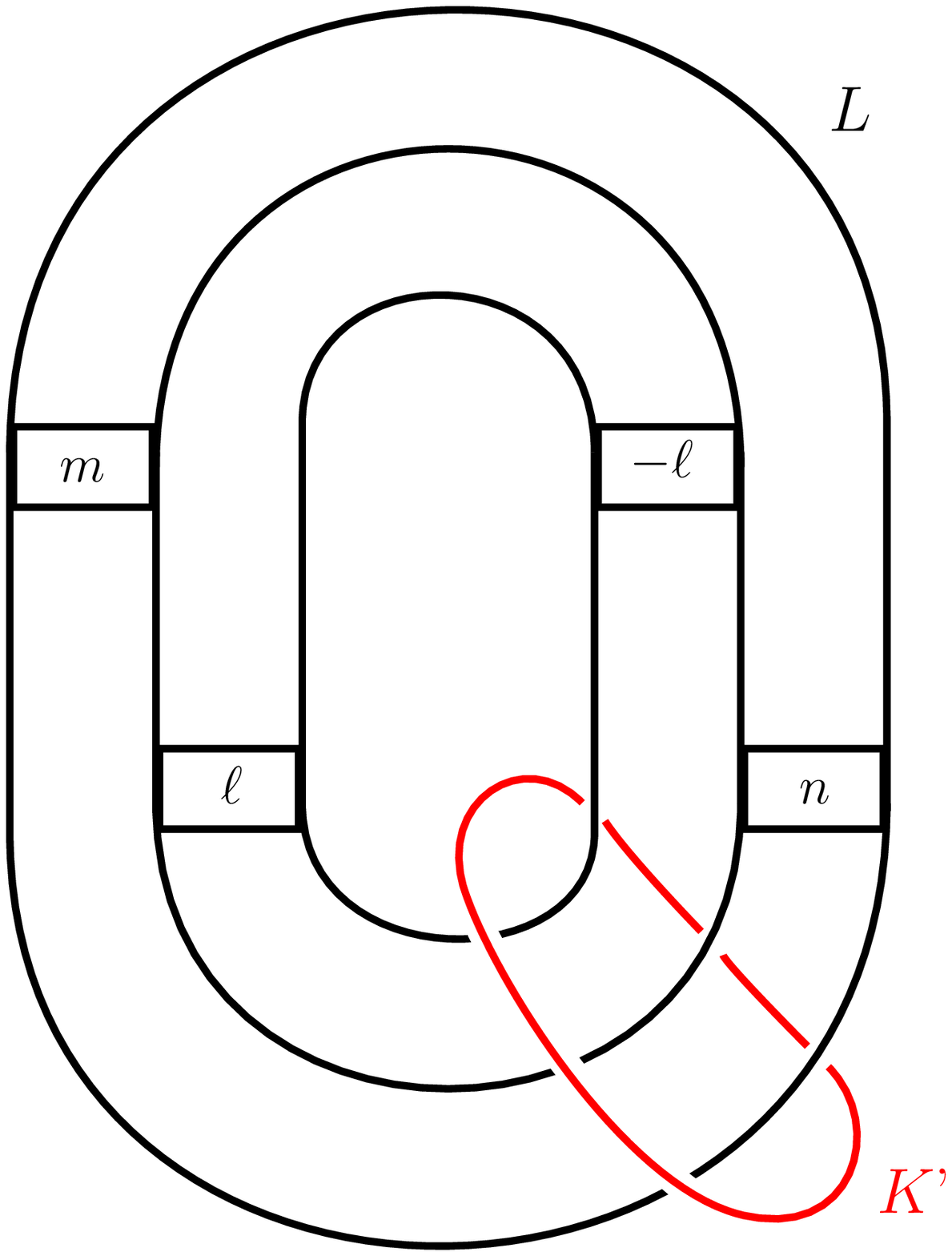}    
\end{center}
\caption{}
\label{generalizacion}

\end{figure}

Let $D_1$, $D_2$, $D_3$ and $D_4$ be the disks shown in Figure \ref{discos}, that is, disks properly embedded in $E(K')$, their boundary lying in $\partial \mathcal{N}(K')$. Note that $\partial D_i =K'$, and that $D_i$ intersects $L$ transversely in 3 points.

\begin{figure}
\begin{center}
\includegraphics[scale=.34]{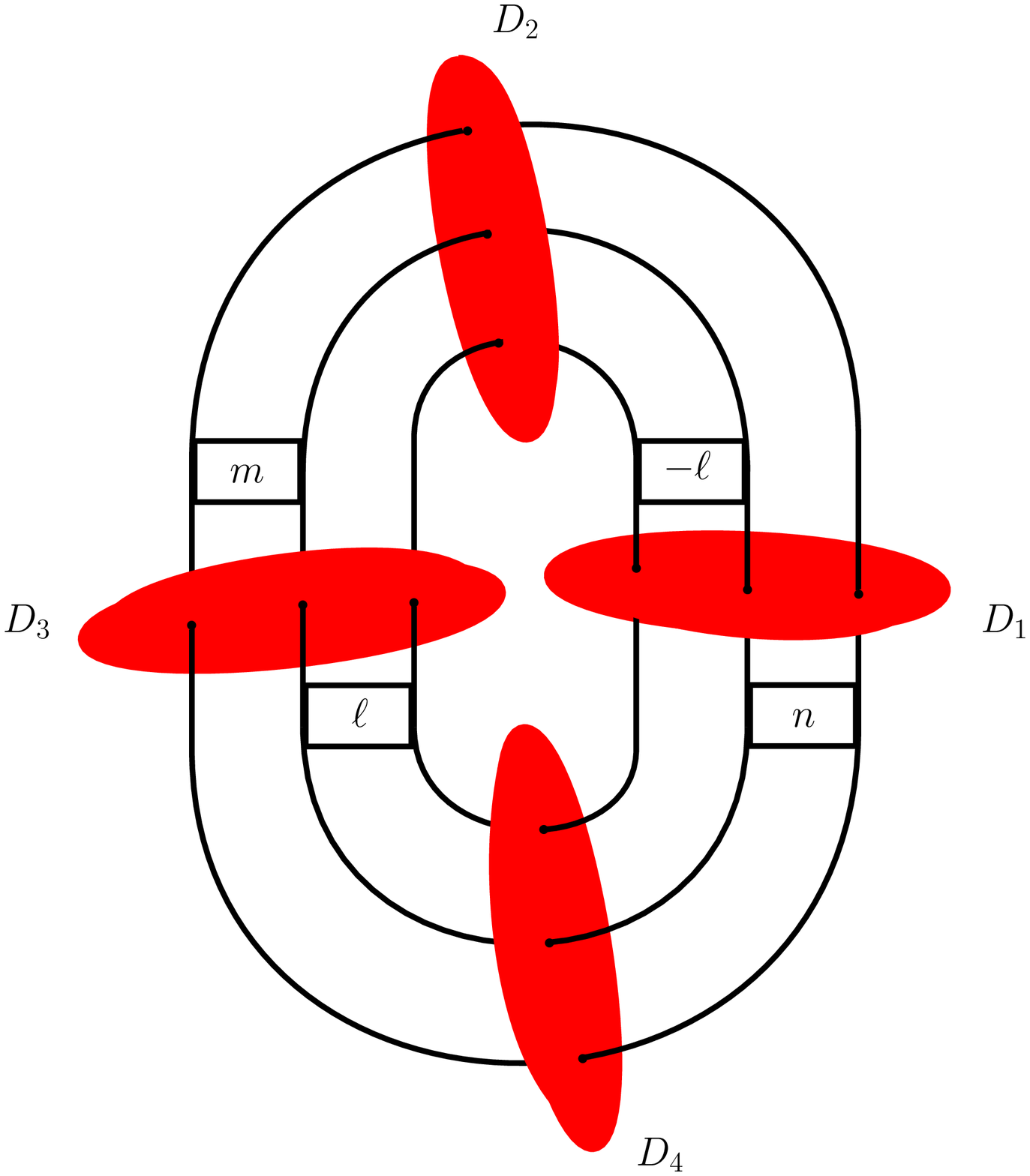}  \end{center}

\caption{}
\label{discos}

\end{figure}

Let $p: S^3 \rightarrow S^3$ be the double cover of $S^3$ branched along $L$. The lifting $K=K(\ell,m,n)=p^{-1}(K')$  is a knot in $S^3$ since the linking number between $K'$ and $L$ is an odd number. Let $T_i=p^{-1}(D_i)$, $i=1,2,3,4$. Then $T_i$ is a genus one Seifert surface for $K$. Let $H_i'$ be the region bounded by $D_i$ and $D_{i+1}$. This is a 3-ball that intersects $L$ in 3 arcs. Note that considered as a 3-tangle, it is a trivial tangle. Therefore,
$H_i=p^{-1}(H_i')$ is a genus two handlebody. Let $E_i'$ be a disk in $D_i$ that contains two points of intersection with $L$, and such that there is another 
disk $E_i$ in $H_i'$, with $\partial E_i' = \partial E_i$, and $E_i' \cup E_i$ bounds a 3-ball $B_i$ contained in $H'_i$ which encloses the 
crossings labelled $\ell$, $m$, or $n$. Let $C_i'$ and $C_i$ be the liftings of $E_i'$ and $E_i$, respectively, these are annuli. The lifting of $B_i$ is a solid torus $R_i$ whose boundary is the union of $C_i'$ and $C_i$. Let $\omega_i$ be the core of $C_i'$, this is a curve that wraps around $R_i$ at least thrice (in fact it wraps around $R_i$, $\ell$, $m$ or $n$ times). Then $\omega_i$ is a power curve in $H_i$, and $C_i$ is a companion annulus. Clearly,
$(T_i\backslash C_i')\cup C_i$ is parallel to $T_{i+1}$. The curve $\omega_i$ is the lift of a curve on $E_i'$ parallel to its boundary. Note that a copy of this curve, and the corresponding curve in $E_{i+1}$ intersect in two points. However, these curves lift to a pair of curves, and in fact, each pair of curves intersect in one point. \end{proof}

\begin{teo} \label{familywidth}
The exterior of the knots $K(\ell,m,n)$ admits a circular decomposition of width $\{3,3\}$.
\end{teo}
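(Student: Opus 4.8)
The plan is to write down an explicit circular handle decomposition of $E(K)$ and simply count its thick levels; since the theorem only asks for the \emph{existence} of a width $\{3,3\}$ decomposition, I need not verify that it is thin. I would begin from the four Seifert surfaces $T_1,\dots,T_4$ as thin levels and analyse each cobordism $H_i$ from $T_i$ to $T_{i+1}$ separately. Since $H_i$ is a genus two handlebody with $\partial H_i=T_i\cup J_i\cup T_{i+1}$ and $T_i,T_{i+1}$ are non-parallel genus one surfaces, $H_i$ carries no product structure over $T_i$, so this cobordism is non-trivial. Using the companion annulus $C_i$, I would realise it by exactly one $1$-handle $\alpha_i$ and one $2$-handle $\beta_i$: tubing $T_i$ along the companion solid torus $R_i$ raises the genus to two and produces a thick level $S_i$ carrying both $\bar C_i$ and $C_i$ joined through $R_i$, and compressing along the image of $\bar C_i$ brings the genus back to one, yielding $(T_i\setminus\bar C_i)\cup C_i$, which is parallel to $T_{i+1}$. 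The handles $\alpha_i,\beta_i$ do not cancel, since $\omega_i$ is a power circle with $p\ge 3$ and the relevant curves meet in more than one point; this is consistent with $T_i$ and $T_{i+1}$ being non-parallel. At this stage $E(K)$ carries a circular decomposition with four genus two thick levels, hence of width $\{3,3,3,3\}$, because a closed genus two surface has complexity $c=1-\chi=3$.

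The reduction to two thick levels comes from cancelling handles at the two alternating thin levels $T_2$ and $T_4$. Near $T_2$ the handles appear in the order $\beta_1,\alpha_2$ (the incoming $2$-handle of $H_1$ followed by the outgoing $1$-handle of $H_2$). Sliding the $1$-handle $\alpha_2$ below the $2$-handle $\beta_1$ and comparing the curves they determine on $T_2$, the $2$-handle $\beta_1$ is dual to the core $\omega_1'$ of $C_1$ carried onto $T_2$, while the $1$-handle $\alpha_2$ is dual to the power circle $\omega_2$. By hypothesis $\omega_1'$ and $\omega_2$ meet transversally in a single point, so the pair $\alpha_2,\beta_1$ satisfies the cancellation criterion and may be removed. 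The same argument cancels $\alpha_4,\beta_3$ at $T_4$ using $\omega_3'\cdot\omega_4=1$.

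Finally I would read off the resulting decomposition. After the two cancellations only the handles $\alpha_1,\beta_2,\alpha_3,\beta_4$ survive: starting from $T_1$, attaching $\alpha_1$ raises the genus to two (thick level $S_1$), attaching $\beta_2$ returns to $T_3$, attaching $\alpha_3$ raises the genus to two again (thick level $S_2$), and attaching $\beta_4$ returns to $T_1$. Thus the thin levels are $T_1$ and $T_3$, the two thick levels $S_1\subset H_1\cup_{T_2}H_2$ and $S_2\subset H_3\cup_{T_4}H_4$ are closed genus two surfaces of complexity $3$, and the decomposition has width $\{3,3\}$, as claimed.

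I expect the main obstacle to be the handle bookkeeping in the second paragraph. One has to pin down, from the companion annulus picture, that the incoming $2$-handle and the outgoing $1$-handle really are dual to the curves $\omega_i'$ and $\omega_{i+1}$ on $T_{i+1}$ (not merely homologous to them), and to check that their single transverse intersection is preserved when $\alpha_{i+1}$ is reordered past $\beta_i$, so that the geometric cancellation lemma genuinely applies. The complementary non-cancellation of $\alpha_i$ with $\beta_i$ inside a single $H_i$, forced by $\omega_i$ being a $p$-th power with $p\ge 3$, should be recorded as well, since it is what keeps each $H_i$ genuinely non-trivial and prevents the decomposition from collapsing further.
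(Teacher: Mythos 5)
Your strategy is genuinely different from the paper's. The paper never does any handle calculus upstairs: it isotopes the link $K'\cup L$ downstairs so that, sweeping disks from $D_2$ to $D_4$ (and from $D_4$ back to $D_2$), the branch curve $L$ presents exactly one local maximum and one local minimum, and it then uses the fact that a disk passing a maximum of $L$ (intersection with $L$ going from $3$ to $5$ points) lifts in the double branched cover to the attachment of a single $1$-handle, while passing a minimum lifts to a single $2$-handle. This exhibits the width $\{3,3\}$ decomposition directly, with thin levels $T_2,T_4$ and thick levels the genus two surfaces $\Sigma_i=p^{-1}(F_i)$; the ``one max, one min'' position downstairs is the avatar of the intersection condition $\omega_i'\cdot\omega_{i+1}=1$ that you are trying to exploit intrinsically.

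The gap in your route is the cancellation step, and it is fatal as stated, not a bookkeeping issue. The cancellation lemma applies to a $1$-handle followed by a $2$-handle whose attaching circle meets the belt circle of the $1$-handle transversely in one point. Your pair at $T_2$ is in the opposite order ($\beta_1$ before $\alpha_2$), and reordering does not fix this: to slide $\alpha_2$ below $\beta_1$ you isotope the feet of $\alpha_2$ off the belt disks of $\beta_1$, so they land in $S_1\setminus\mathcal{N}(b_1)$, where $b_1\subset S_1$ is the attaching circle of $\beta_1$; after the reorder, $b_1$ lies in the part of the new genus three surface away from the $\alpha_2$-tube, hence is \emph{disjoint} from the belt circle of $\alpha_2$. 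Geometric intersection $0$, never $1$, and no isotopy on the surface can change that; a $2$-handle followed by a $1$-handle simply never forms a cancelling pair after reordering. What reordering alone gives is amalgamation of the two thick levels into one genus three level, i.e.\ it turns $\{3,3,3,3\}$ into $\{5,5\}$, which is lexicographically worse. What you actually need is that the resulting genus three relative Heegaard surface of $H_1\cup_{T_2}H_2$ \emph{destabilizes}, i.e.\ that after suitable handle slides (which do change attaching circles non-isotopically) some $1$-handle/$2$-handle pair becomes cancelling; translating the hypothesis $\omega_1'\cdot\omega_2=1$ into such a destabilization is exactly the missing content, since $\omega_1'$ is not the attaching circle of $\beta_1$ (that curve lives on $S_1$) and $\omega_2$ is not the belt circle of $\alpha_2$, so ``dual to'' does not hand you the hypotheses of the lemma. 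Your first step is essentially fine, modulo wording: the thick level comes from tubing $T_i$ along the \emph{core} of $R_i$, since $\partial\mathcal{N}(T_i\cup R_i)$ itself is just a surface parallel to $T_{i+1}$, not a genus two surface.
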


\begin{proof} The link $K'\cup L$ shown in Figure \ref{generalizacion} is isotopic to the link shown in Figure \ref{generalizadoMorse}. Let $D_2$, $F_1$, $D_4$ and $F_2$ be the disks shown in Figure \ref{generalizadoMorse}, these are disks properly embedded in $E(K')$, with their boundary lying in $\partial \mathcal{N}(K')$. Note that $F_1$ and $F_2$ intersect $L$ transversely in 5 points. As before, $T_i=p^{-1}(D_i)$, $i=2,4$, is a genus one Seifert surface for $K=K(\ell,m,n)$, while $\Sigma_i=p^{-1}(F_i)$, $i=1,2$, is a genus two Seifert surface for $K$.
By taking a collection of parallel disks between $D_2$ and $F_1$, we note that there is  collection of disks intersecting $L$ in three points, then there is a single disk intersecting $L$ in three points and in a local maximum point and then a collection of disks intersecting $L$ in five points. In the double branched cover, this implies that we pass from $T_2$ to $\Sigma_1$ by adding a 1-handle. Similarly, when passing from $F_1$ to $D_4$, there is a disk that intersects $L$ in a local minimum. In the double branched cover this implies that we pass from $\Sigma_1$ to  $T_4$ by adding a 2-handle. Something similar can be done for $D_4$, $\Sigma_2$ and $D_2$. Then $T_2$, $\Sigma_1$, $T_4$ and $\Sigma_2$ define a circular decomposition for the exterior of $K$, which has width $\{ 3,3 \}$. \end{proof}

\begin{figure}
\begin{center}
\includegraphics[scale=.34]{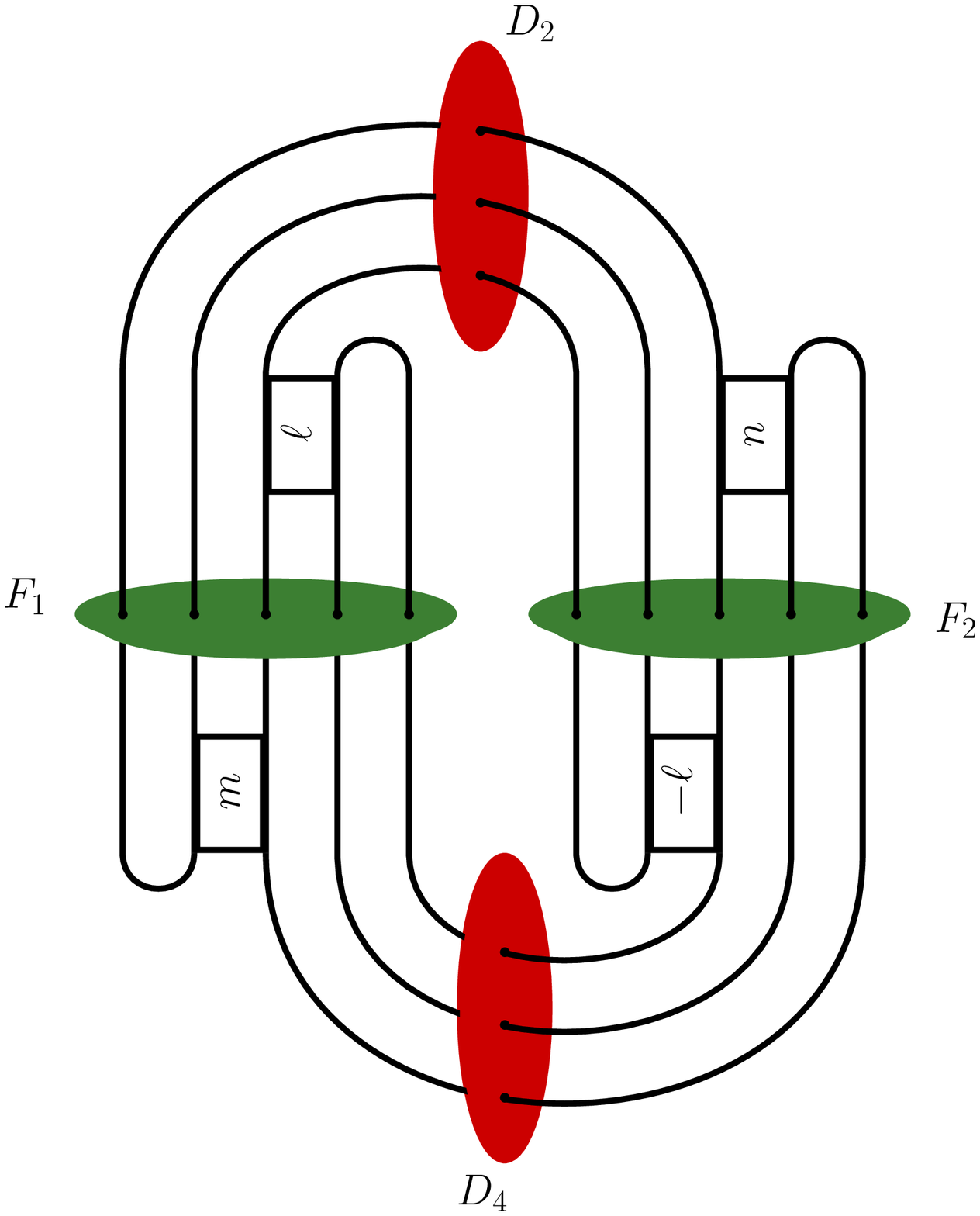}    
\end{center}
\caption{}
\label{generalizadoMorse}

\end{figure}

\section{Knots without a decomposition of width $\{ 3 \}$}\label{width}
\(\)

Suppose that $K\subset S^3$ is a genus one knot and $\mathbb{T}=T_1\sqcup T_2\sqcup T_3\sqcup T_4$ is a collection of four mutually disjoint and non-parallel genus one Seifert surfaces for $K$. Denote by $H_1$,  $H_2$, $H_3$, and $H_4$ the complementary regions of the surfaces in the exterior of $K$. Suppose that all conditions for the surfaces and the regions stated just before Theorem \ref{familyknots} are satisfied.

\begin{lema} \label{hyperbolic}
The knot $K$ is hyperbolic. \end{lema}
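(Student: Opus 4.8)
The plan is to prove hyperbolicity via Thurston's geometrization for knot complements, which reduces to showing that $E(K)$ is irreducible, atoroidal, and not a Seifert fibered space (equivalently, $K$ is not a torus knot nor a satellite knot). Since $K$ is a genus one knot with four non-parallel genus one Seifert surfaces, it is certainly nontrivial, so $E(K)$ is irreducible. The main content is therefore ruling out essential tori in the complement.

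First I would suppose, for contradiction, that $E(K)$ contains an essential (incompressible, non-boundary-parallel) torus $\mathcal{T}$. The strategy is to put $\mathcal{T}$ in a position that interacts controllably with the given surfaces $\mathbb{T}=T_1\cup T_2\cup T_3\cup T_4$ and the handlebody regions $H_i$. Using an innermost-disk and standard incompressibility argument, I would isotope $\mathcal{T}$ to meet $\mathbb{T}$ in a minimal collection of essential simple closed curves, with no curve of $\mathcal{T}\cap\mathbb{T}$ bounding a disk on either surface. This decomposes $\mathcal{T}$ into pieces lying in the genus two handlebodies $H_i$. Each such piece is an incompressible, boundary-incompressible planar or annular surface properly embedded in a handlebody; since a handlebody contains no closed essential surface, every piece must be an annulus, and the whole torus $\mathcal{T}$ must be built from annuli glued along the $T_i$.

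\emph{Here is where the hypotheses on the power circles do the work.} The key structural input is Lemma \ref{tejano2}: an essential annulus in the genus two handlebody $H_i$ must be (up to isotopy) the companion annulus of a power circle, and by hypothesis the only such circles are $\omega_i$ (in $T_i$) and $\omega_i'$ (in $T_{i+1}$), with unique companion annuli $C_i$ and $C_i'$. Thus any annular piece of $\mathcal{T}$ in $H_i$ is parallel to $C_i$ or $C_i'$, so the boundary curves of $\mathcal{T}\cap T_{i+1}$ are forced to be parallel to $\omega_i'$ (from the $H_i$ side) and simultaneously parallel to $\omega_{i+1}$ (from the $H_{i+1}$ side). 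But the fourth hypothesis states precisely that $\omega_i'$ and $\omega_{i+1}$ intersect transversely in a single point, so they cannot be isotoped to be disjoint or parallel on $T_{i+1}$. This contradiction shows that the annular pieces cannot be matched across the $T_i$ to close up into a torus, so no essential torus $\mathcal{T}$ exists and $E(K)$ is atoroidal.

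\emph{The main obstacle} I anticipate is the bookkeeping in the intersection-minimization step: I must rule out the degenerate cases where $\mathcal{T}$ is disjoint from some $T_i$ (so that it lives entirely in a single $H_i$, where it would have to be compressible or boundary-parallel, contradicting essentiality) and must verify that after minimization no piece is a boundary-parallel annulus that could be isotoped away, which would otherwise let $\mathcal{T}$ escape the power-circle dichotomy. Finally I would dispatch the Seifert-fibered case by noting that a nontrivial genus one knot with this many distinct incompressible Seifert surfaces is not a torus knot (torus knots are fibered, hence have a unique Seifert surface), and that the satellite case is excluded by the atoroidality just established; therefore by geometrization $K$ is hyperbolic.
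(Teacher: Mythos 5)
Your route (geometrization plus an analysis of a hypothetical essential torus against $\mathbb{T}$) is genuinely different from the paper's proof, which is a two-line verification: the hypotheses imply that no power circle can have companion annuli on both sides of any $T_i$, and then part 1 of Lemma 8.1 of \cite{Tejano} is invoked to conclude hyperbolicity. However, your argument has a genuine gap at its central step. You assert that Lemma \ref{tejano2} forces every annular piece of $\mathcal{T}$ lying in $H_i$ to be parallel to $C_i$ or $C_i'$. Lemma \ref{tejano2} says no such thing: it only concerns \emph{companion} annuli, i.e.\ separating annuli whose two boundary circles cobound an annulus in $\partial H_i$ with prescribed core, and it gives existence/uniqueness of the companion annulus of a given power circle --- not a classification of all essential annuli in $H_i$ with boundary on $T_i\sqcup T_{i+1}$. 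That classification is exactly the content of the paper's Lemma \ref{5anillos}, whose proof is substantial, and it produces \emph{five} annuli, not two: besides $C_i$ and $C_i'$ there are two spanning annuli $A_i$, $A_i'$ (one boundary circle on each of $T_i$, $T_{i+1}$) and an annulus $\widehat{A_i}$ parallel into $\partial\mathcal{N}(K)$. You also implicitly assume $\omega_i$ and $\omega_i'$ are the \emph{only} power circles in $H_i$, which is not among the stated hypotheses and itself requires an argument (in the paper this is handled by first isotoping a given annulus off the five model annuli and using disjointness of curves on a once-punctured torus).

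These omissions are not harmless. Pieces of $\mathcal{T}$ parallel to $\widehat{A_i}$ cannot be ``isotoped away'' as you propose for boundary-parallel pieces: their parallelism region runs across $J_i\subset\partial E(K)$, so they survive your minimization, and a torus assembled from such pieces produces no slope contradiction at all, since all of its intersection curves with the $T_i$ are parallel to $K$. For that configuration you would need a separate argument that the resulting torus is peripheral (isotopic into a collar of $\partial E(K)$), hence not essential. The spanning annuli, by contrast, do carry the power-circle slopes on both sides, so your intersection-number contradiction ($\omega_i'$ meets $\omega_{i+1}$ in one point) still applies to them; thus the skeleton of your proof is salvageable. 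But as written, the case analysis is not exhaustive and its justification rests on a misreading of Lemma \ref{tejano2}; filling it in would require controlling boundary-compressions whose arcs cross $J_i$, which is precisely the delicate point in the proof of Lemma \ref{5anillos}. The closing steps (torus knots are fibered and so have a unique Seifert surface; satellites are excluded by atoroidality) are fine.
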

\begin{proof}
The given conditions imply that for each $T_i$ ($i=1,\ldots, 4$) a power circle in $H_i$ cannot have companion annuli on both sides of $T_i$.
Then part 1 of Lemma 8.1 of \cite{Tejano} implies that $H$ is hyperbolic.
\end{proof}


\begin{lema}\label{5anillos}
For each $i=1,\ldots, 4$, there are up to isotopy five incompressible annuli properly embedded in $H_i$, whose boundaries lie in $T_i \sqcup T_{i+1}$, and which are not parallel onto $T_i$ or $T_{i+1}$. 
\end{lema}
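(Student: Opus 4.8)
The plan is to first unpack the geometry of $H_i$ supplied by the hypotheses, then exhibit five essential annuli explicitly, and finally argue that every essential annulus with boundary in $T_i\sqcup T_{i+1}$ is isotopic to one of them. By Lemma \ref{tejano2} the power circle $\omega_i$ has a unique companion annulus $C_i$, which cobounds with the sub-annulus $\overline{C}_i\subset T_i$ a companion solid torus $V_i$, and cutting $H_i$ along $C_i$ leaves a genus two handlebody in which the core of $C_i$ is primitive. Combined with the hypothesis that the once-punctured torus $(T_i\setminus\overline{C}_i)\cup C_i$ pushed off $T_i$ is parallel to $T_{i+1}$, this identifies $H_i$ with the union of a product region $N\cong T_{i+1}\times I$ and the solid torus $V_i$, attached along $C_i$ whose core $c$ winds $p\ge 3$ times around $V_i$. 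I record at the outset that $T_i$ and $T_{i+1}$ are incompressible (being minimal genus Seifert surfaces), so every essential simple closed curve on either one is incompressible in $H_i$, and that $\omega_i,\omega_i'$ are the only power circles on $T_i,T_{i+1}$; by the uniqueness clause of Lemma \ref{tejano2} this already pins down the two essential annuli whose boundary lies entirely on a single face.

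The five annuli I would exhibit are the two companion annuli together with three spanning annuli. The companion annulus $C_i$ has both boundary circles on $T_i$ (slope $\omega_i$); the companion annulus $C_i'$ of $\omega_i'$ has both boundary circles on $T_{i+1}$. The first spanning annulus $A_0$ is obtained by taking a push-off of $c$ into $N$ and capping it off through $V_i$ along the $\partial$-parallel fibred annulus joining $C_i$ to $\overline{C}_i$, so that $A_0$ runs from $\omega_i\subset T_i$ to $\omega_i'\subset T_{i+1}$. The remaining two spanning annuli $A_1,A_2$ are obtained by taking the vertical annulus over a curve dual to $c$ in $T_{i+1}$ and routing the resulting essential arc on $C_i$ across $V_i$ to $\overline{C}_i\subset T_i$; that there are exactly two of these, distinguished by the direction in which the connecting band follows the winding of $V_i$, is itself part of the completeness count. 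One checks directly that each annulus is incompressible, using incompressibility of $T_i,T_{i+1}$ and of $C_i,C_i'$, and that none is parallel into $T_i$ or $T_{i+1}$, the twisting $p\ge 3$ being what prevents the spanning annuli from being boundary parallel.

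For completeness I would take an arbitrary essential, non boundary-parallel annulus $A$ with $\partial A\subset T_i\sqcup T_{i+1}$ and isotope it to meet $C_i\cup C_i'$ minimally. Standard innermost-disk and outermost-arc arguments remove closed curves of $A\cap C_i$ that are trivial on $C_i$ or on $A$, and reduce the essential intersections to arcs running between the two boundary circles of $C_i$, and similarly for $C_i'$. Cutting $H_i$ along $C_i$ then splits $A$ into pieces lying in $N$ and in $V_i$: the $N$-pieces are vertical sub-annuli or rectangles over arcs and curves of $T_{i+1}$, while the $V_i$-pieces are rectangles together with fibred or $\partial$-parallel annuli, so each is determined up to isotopy by how its boundary arcs sit on $C_i$. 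Reassembling, the slopes of $\partial A$ on the two faces and the winding of the $V_i$-pieces are forced to match one of the five configurations above; here $p\ge 3$ rules out the extra identifications of winding numbers that would collapse or multiply the list for $p=2$, and the symmetric analysis with $C_i'$ and its companion solid torus removes the remaining ambiguity.

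The step I expect to be the main obstacle is precisely this completeness argument: controlling the arcs of $A\cap C_i$ and, above all, bounding the number of times a $V_i$-piece may wind around the companion solid torus before the surface fails to close up into an embedded annulus. An efficient way to organize this, and a useful cross-check on the count, is to pass to the double branched cover picture of the defining construction, make $A$ equivariant under the covering involution, and translate the problem into counting essential decomposing disks meeting the trivial three-string tangle in two points; the five annuli then correspond to five essential such disks, and equivariance supplies the finiteness that is awkward to see directly in $H_i$.
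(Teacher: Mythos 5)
Your overall strategy---exhibit five annuli explicitly, then show an arbitrary incompressible annulus can be isotoped to match one of them by cutting along the companion annuli---is close in spirit to the paper's proof (which isotopes the arbitrary annulus $B$ off all five model annuli and then reads off its boundary slopes). But your list of five is not the correct one, and this is a fatal error rather than a cosmetic one. The two ``dual'' spanning annuli $A_1,A_2$ you propose do not exist in $H_i$. The connecting band you need would be an embedded rectangle $R\subset V_i$ with one side a single spanning arc of $C_i$ and the opposite side a single spanning arc of $\overline{C}_i$; its boundary would then be a simple closed curve on $\partial V_i=C_i\cup\overline{C}_i$ meeting the core of $C_i$ exactly once, hence essential in $\partial V_i$. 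A curve on $\partial V_i$ bounding an embedded disk in the solid torus $V_i$ must be a meridian, and a meridian meets the core of $C_i$ (a curve winding $p$ times around $V_i$) exactly $p\geq 3$ times, not once---so no such rectangle exists, no matter how the band ``follows the winding.'' In $\pi_1(H_i)\cong\langle x,b\rangle$ the same obstruction appears: a curve on $T_i$ dual to $\omega_i$ represents an element of the form $bx^{j}$ with $j\not\equiv 0\ (\mathrm{mod}\ p)$, which is never conjugate to the dual curve $b$ on $T_{i+1}$, so no annulus joins dual slopes. The hypothesis $p\geq 3$ does not merely prevent these annuli from being boundary parallel, as you assert; it kills them altogether.

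At the same time you omit two genuine members of the list: the second spanning annulus with power-circle boundary slopes (the paper obtains the two spanning annuli $A_i, A_i'$ as the two oriented double curve sums of $C_i$ and $C_i'$, which intersect in two circles), and the annulus $\widehat{A_i}$ obtained by pushing $J_i\subset\partial\mathcal{N}(K)$ into $H_i$, whose boundary curves are parallel to $\partial T_i$ and $\partial T_{i+1}$; this last annulus is not parallel into $T_i$ or $T_{i+1}$ (only into a region of $\partial H_i$ containing $J_i$), so it must be counted. Since the later arguments (Lemma \ref{noanillosconsecutivos} and the case analysis proving Theorem \ref{principal}) depend on knowing exactly which annuli occur and with which boundary slopes, a completeness argument that certifies the wrong list cannot be repaired locally: your cut-along-$C_i\cup C_i'$ analysis, carried out correctly, must output $\{C_i, C_i', A_i, A_i', \widehat{A_i}\}$, and in particular must detect the knot-parallel slope, which your pieces never produce. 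Finally, the closing suggestion to pass to the double branched cover is not available at the level of generality of the lemma: the lemma is stated for any genus one knot satisfying the abstract conditions of Section \ref{ejemplos}, not only for the knots $K(\ell,m,n)$ that happen to arise as branched covers, and in any case that step is only floated, not carried out.
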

\begin{proof}

First we note that $H_i$ contains five incompressible annuli properly embedded associated to the power circle in $H_i$. These annuli are: 
\begin{itemize}
    \item The companion annulus $C_i$ for the power circle in $H_i$ such that $\partial C_i\subset T_i$.
    \item The companion annulus $C_i'$ for the power circle in $H_i$ such that $\partial C_i'\subset T_{i+1}$.
    \item The spanning annulus $A_i$ for the power circle in $H_i$ with one boundary component in $T_i$ and the other in $T_{i+1}$.
    \item A second spanning annulus $A_i'$ for the power circle in $H_i$  with one boundary component in $T_i$ and the other in $T_{i+1}$.
    \item An annulus $\widehat{A_i}$ boundary parallel into $\partial\mathcal{N}(K)$.
\end{itemize}

The spanning annuli $A_i$ and $A_i'$ are obtained by doing one of the two possible oriented double curve sums of $C_i$ and $C_i'$.
In the complementary region $H_i$ we represent these five annuli as in Figure \ref{anillos}. That is, we represent by a blue circle the companion torus, by green arcs the companion annuli, by red arcs the spanning annuli, and by a purple arc the annulus parallel into $\partial\mathcal{N}(K)$.

Note that each one of these annuli is not boundary parallel into $T_i$ or $T_{i+1}$, and that any $\partial$-compression disk for these annuli must intersect the annulus $J_i$ contained in $\partial \mathcal{N}K$.

\begin{figure}
\begin{center}
\includegraphics[scale=.5]{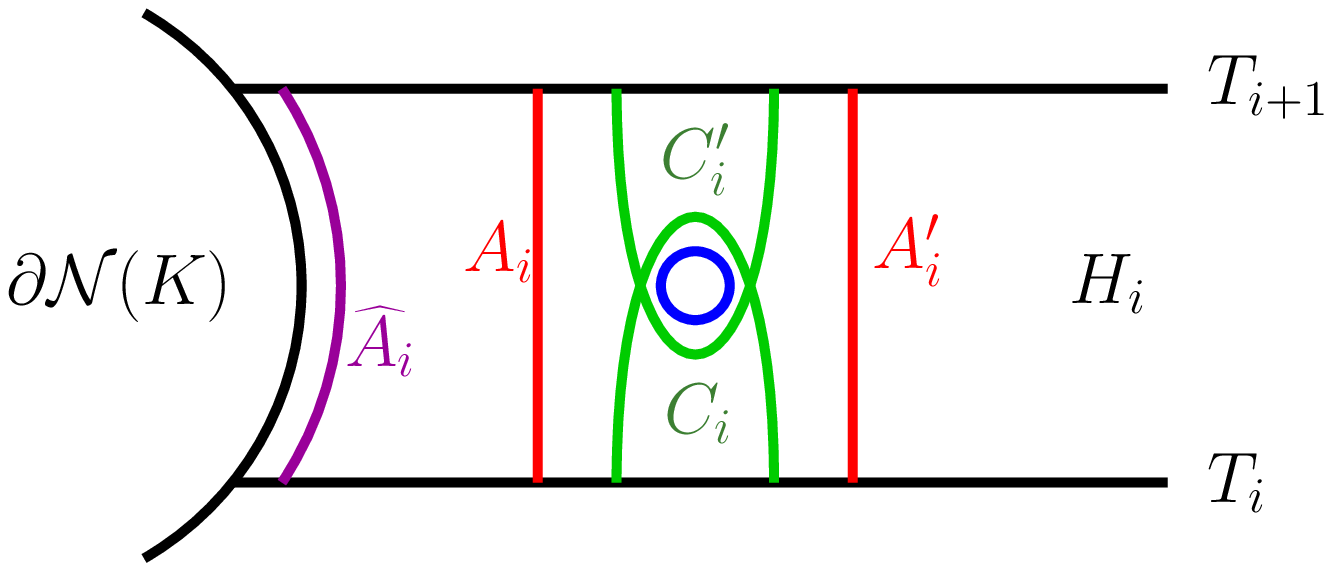}    
\end{center}
\caption{}
\label{anillos}
\end{figure}

Now suppose that $B$ is a properly embedded and incompressible annulus in $H_i$, whose boundary lies in $T_i\sqcup T_{i+1}$. If there is a $\partial$-compression disk for $B$ disjoint from $J_i$, then $B$ is boundary parallel, for otherwise $T_i$ or $T_{i+1}$ would be compressible. Isotope $B$ to intersect $C_i\sqcup C_i'\sqcup A_i\sqcup A_i'\sqcup  \widehat{A_i}$ minimally. Trivial closed intersection curves on $B$ or $C_i\sqcup C_i'\sqcup A_i\sqcup A_i'\sqcup \widehat{A_i}$ can be eliminated since the surfaces are incompressible and $H_i$ is irreducible. Suppose first that $\partial B$ is not disjoint from the boundary curves of all the annuli $C_i\sqcup C_i'\sqcup A_i\sqcup A_i'\sqcup \widehat{A_i}$. Then there must be intersection arcs between them. If there is an arc of intersection which is trivial in one of the annuli, then either it can be removed by an isotopy or it would imply that one of the annuli is $\partial$-compressible, which is not possible. If $\partial B$ has both components on $T_i$, then there must be trivial intersection arcs on $A_i$, and if $\partial B$ has one component in $T_i$ and the other in $T_{i+1}$, then there is a trivial intersection arc in $B$. 

Then suppose that $\partial B$ is disjoint from the boundary curves of all the annuli $C_i\sqcup C_i'\sqcup A_i\sqcup A_i'\sqcup \widehat{A_i}$. If there are closed essential intersection curves on $B$ or $C_i\sqcup C_i'\sqcup A_i\sqcup A_i'\sqcup \widehat{A_i}$, then the boundary curves of $B$ would be parallel to the boundary curves of one of the annuli $C_i, C_i', A_i, A_i', \widehat{A_i}$. By taking an outermost curve of intersection lying in $B$, an isotopy can be done to reduce the number of curves of intersection. Thus we have that $B\cap(C_i\sqcup C_i'\sqcup A_i\sqcup A_i'\sqcup \widehat{A_i})=\emptyset$. Suppose $B$ is not parallel into $\partial H_i$. By symmetry we may assume only two cases: $\partial B\subset T_i$ (or $T_{i+1}$) and $\partial_1 B\subset T_i$, $\partial_2 B\subset T_{i+1}$.
If $\partial B\subset T_i$, then $\partial B$ cobound an annulus on $T_i$ and $B$ is separating on $H_i$. Then $B$ is a companion annulus, but there is up to isotopy only one companion annulus in $H_i$ (Lemma \ref{tejano2}), so $B$ is parallel to $C_i$. Similarly, if $\partial B\subset T_{i+1}$, then $B$ is parallel to $C'_i$.
Now, suppose $\partial_1 B\subset T_i$, $\partial_2 B\subset T_{i+1}$. Since $B\cap(C_i\sqcup C_i'\sqcup A_i\sqcup A_i'\sqcup \widehat{A_i})=\emptyset$, $\partial_1 B\subset T_i$ must be parallel to either $\partial C_i$ or $\partial_1\widehat{A_i}$ and $\partial_2 B\subset T_{i+1}$ must be parallel to either $\partial C'_i$ or $\partial_2\widehat{A_i}$ . Then $B$ is parallel to one of the spanning annuli $A_i, A_i'$, or $\widehat{A_i}$ .

\end{proof}

\begin{lema}\label{noanillosconsecutivos}
If $A$ is an incompressible annulus properly embedded in $H_i\cup H_{i+1}$, whose boundary lies on $T_i\cup T_{i+2}$, then either it is parallel to an annulus in $\partial \mathcal{N}(K)$, or it can be isotoped to lie in $H_i$ or $H_{i+1}$. \end{lema}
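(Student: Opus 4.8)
The plan is to analyze the annulus $A$ by its intersection with the separating surface $T_{i+1}$, which cuts $H_i \cup H_{i+1}$ into the two handlebodies $H_i$ and $H_{i+1}$. First I would isotope $A$ so that $A \cap T_{i+1}$ is minimal, and use the incompressibility of $T_{i+1}$ (it is a genus one Seifert surface, hence incompressible) together with the irreducibility of each $H_j$ to remove any trivial closed curves of intersection in the standard way: a trivial curve innermost on $A$ bounds a disk that can be used to surger, and a trivial curve innermost on $T_{i+1}$ bounds a compressing disk for $T_{i+1}$ contradicting incompressibility (or is removable by isotopy). Since $\partial A \subset T_i \cup T_{i+2}$ is disjoint from $T_{i+1}$, all intersection curves are closed. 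If $A \cap T_{i+1} = \emptyset$, then $A$ already lies entirely in $H_i$ or in $H_{i+1}$ and we are done; so the substance is the case $A \cap T_{i+1} \neq \emptyset$.

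Next I would examine the pieces into which $T_{i+1}$ cuts $A$. After removing trivial curves, the remaining intersection curves are essential in $A$, so they are parallel copies of the core of $A$ and cut $A$ into a sequence of sub-annuli $A_1, A_2, \ldots$ lying alternately in $H_i$ and $H_{i+1}$. The two outermost pieces have one boundary component on $T_i$ (resp. $T_{i+2}$) and one on $T_{i+1}$; every interior piece has both boundary components on $T_{i+1}$. Each such sub-annulus is incompressible in the handlebody containing it (an innermost compressing disk would contradict incompressibility of $A$). This is exactly the situation classified by Lemma \ref{5anillos}: every incompressible annulus in $H_j$ with boundary on $T_j \sqcup T_{j+1}$, not boundary-parallel into $T_j$ or $T_{j+1}$, is one of the five listed annuli. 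An interior piece with both boundary curves on $T_{i+1}$ is, up to isotopy, either boundary-parallel into $T_{i+1}$ or one of the companion annuli $C_i, C_i'$ (those are the only incompressible annuli with both boundaries on a single $T$-surface).

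The heart of the argument, and the step I expect to be the main obstacle, is to show that any nonempty intersection can be removed, i.e. to derive a contradiction or an isotopy reducing $|A \cap T_{i+1}|$. If an interior piece is boundary-parallel into $T_{i+1}$, then its two boundary curves cobound a disk-free annulus on $T_{i+1}$, and I would use this parallelism to isotope $A$ across $T_{i+1}$, thereby cancelling two intersection curves and contradicting minimality. The delicate case is when an interior piece equals a companion annulus $C_i$ or $C_i'$: here I would use the key hypothesis that $\omega_i'$ and $\omega_{i+1}$ intersect transversely in one point (equivalently, that the companion annuli on the two sides of $T_{i+1}$ have cores meeting once on $T_{i+1}$). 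Because the core of $C_i'$ meets the core of $C_{i+1}$ exactly once, an interior piece of $A$ coinciding with one of these companion annuli forces its $T_{i+1}$-boundary curve to meet the boundary of the companion annulus on the other side, which is incompatible with the sub-annuli matching up consistently along their shared curves on $T_{i+1}$ to form the single embedded annulus $A$. Pushing this incompatibility through all the interior pieces shows that no interior piece can occur, so $A \cap T_{i+1}$ has at most the curves coming from the two outermost pieces; but two such outermost pieces sharing a single essential curve on $T_{i+1}$ would again have to glue along compatible companion-type curves, which the one-point intersection hypothesis forbids. Hence $A \cap T_{i+1} = \emptyset$ after all, and $A$ lies in a single $H_j$, completing the proof.
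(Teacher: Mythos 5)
Your strategy coincides with the paper's: cut $A$ along $T_{i+1}$, classify the resulting sub-annuli via Lemma \ref{5anillos}, and rule out essential gluings using the fact that the companion slopes $\omega_i'$ and $\omega_{i+1}$ on the two sides of $T_{i+1}$ intersect in one point (the paper phrases this as the non-existence of companion annuli on both sides of $T_{i+1}$ with the same boundary slope, citing Lemma 5.1 of \cite{Tejano}). However, your final step fails, and the failure is visible in your conclusion: you assert that $A\cap T_{i+1}=\emptyset$ always, so that $A$ always lies in a single handlebody. That is strictly stronger than the lemma, and it is false --- the first alternative in the statement, ``parallel to an annulus in $\partial\mathcal{N}(K)$,'' is there precisely because such annuli exist and cannot be pushed off $T_{i+1}$.

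Concretely, your gluing analysis accounts only for companion annuli and boundary-parallel pieces, and forgets that Lemma \ref{5anillos} also lists spanning annuli, in particular the annulus $\widehat{A_i}$ parallel into $\partial\mathcal{N}(K)$, whose boundary slope on $T_{i+1}$ is parallel to $K$ (i.e.\ to $\partial T_{i+1}$), not a power circle. That slope is available on \emph{both} sides of $T_{i+1}$: an outermost piece in $H_i$ can be $\widehat{A_i}$ and an outermost piece in $H_{i+1}$ can be $\widehat{A_{i+1}}$, and these glue along a $K$-parallel curve on $T_{i+1}$ with no incompatibility at all. The result is an embedded incompressible annulus running from $T_i$ to $T_{i+2}$ which is parallel to an annulus in $\partial\mathcal{N}(K)$; since its two boundary components lie in different components of $(H_i\cup H_{i+1})\backslash T_{i+1}$, it can never be isotoped off $T_{i+1}$. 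So the one-point intersection hypothesis does not ``forbid'' the gluing of outermost pieces, as you claim; it only forbids gluings along the power-circle slopes $\omega_i'$ and $\omega_{i+1}$. The correct ending is: once boundary-parallel pieces are removed, slope matching on $T_{i+1}$ leaves exactly two possibilities --- either there are no intersection curves at all, giving the second alternative of the lemma, or every piece is of $\widehat{A}$-type, in which case $A$ is parallel to an annulus in $\partial\mathcal{N}(K)$, giving the first. (A minor additional slip: an interior piece in $H_i$ with both boundaries on $T_{i+1}$ is $C_i'$, and one in $H_{i+1}$ is $C_{i+1}$; your ``$C_i, C_i'$'' mislabels these, though this does not affect the main issue.)
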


\begin{proof} This follows from Lemma \ref{5anillos} and from the fact that there are no companion annuli on both sides of $T_{i+1}$ with the same boundary slope (Lemma 5.1 of \cite{Tejano}). \end{proof}

\begin{lema}\label{3pK}
For each $i=1,\ldots, 4$, $H_i$ cannot contain a properly embedded pair of pants $P$ with all its boundaries parallel to $K$, that is, such that each component of $\partial{P}$ lies on $T_i$ (or $T_{i+1}$) and it is parallel to $\partial T_i$ (or $\partial T_{i+1}$), or it is an essential curve on $J_i \subset \partial \mathcal{N}(K)$.
\end{lema}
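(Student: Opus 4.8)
The plan is to assume for contradiction that $H_i$ contains such a pair of pants $P$, and to show that $P$ can be pushed into a product region of $H_i$ where a planar surface with three $K$--parallel boundary curves cannot exist.

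First I would check that $P$ is incompressible. A compressing disk $D$ for $P$ has $\partial D$ essential in $P$, hence isotopic in $P$ to one of the three boundary curves; by hypothesis that curve is isotopic on $T_i$ (or $T_{i+1}$) to $\partial T_i$ (or $\partial T_{i+1}$), or to the core of $J_i$. In every case $\partial D$ is an essential, non--disk--bounding curve on the once--punctured torus $T_i$ (or $T_{i+1}$), so $D$ would be a compressing disk for $T_i$ (or $T_{i+1}$). Compressing a genus--one Seifert surface lowers its genus and produces a disk bounded by $K$, forcing $K$ to be trivial and contradicting Lemma \ref{hyperbolic}. Hence $P$ is incompressible.

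Next I would eliminate the companion annulus from the picture. The boundary curves of $P$ lying on $T_i$ are parallel to $\partial T_i$ and hence peripheral, so they can be isotoped off $\partial C_i$ (which is parallel to the power circle $\omega_i$); thus $\partial P\cap\partial C_i=\emptyset$, and $P\cap C_i$ consists of circles only. An essential such circle would be simultaneously parallel to a $K$--curve (across $P$) and to $\omega_i$ (across $C_i$), which is impossible since a $K$--parallel curve is null--homologous while $[\omega_i]\neq 0$ in $H_1(H_i)$; the remaining trivial circles are removed using irreducibility of $H_i$. Therefore $P$ may be isotoped into a component of $H_i\setminus C_i$. By Lemma \ref{tejano2} these components are a solid torus $V_i$ and a genus--two handlebody; since $\pi_1(P)$ is free of rank two it cannot inject into $\pi_1(V_i)\cong\mathbb Z$, so incompressibility forces $P$ to lie in the genus--two handlebody $H_{i,A}$.

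The decisive point is that the parallelism hypothesis identifies $H_{i,A}$ with a product: the once--punctured torus $(T_i\setminus\overline{C_i})\cup C_i$ is parallel to $T_{i+1}$, so $H_{i,A}\cong T\times I$ with $T$ a once--punctured torus and $J_i=\partial T\times I$. Pushing $\partial P$ into $J_i$, we obtain an incompressible pair of pants in the $I$--bundle $T\times I$, all of whose boundary curves are parallel to $\partial T$. By the classification of incompressible, $\partial$--incompressible surfaces in an $I$--bundle, the essential pieces obtained from $P$ by maximal $\partial$--compression are horizontal once--punctured tori or vertical annuli parallel to $J_i$; reassembling these forces $P$ to be boundary--parallel. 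But $\partial(T\times I)$ contains no pair of pants whose three boundary curves are all parallel to $\partial T$, so no such $P$ exists. I expect the main obstacle to lie precisely here: rigorously deducing $H_{i,A}\cong T\times I$ from the parallelism hypothesis, and then controlling the $\partial$--compressions of $P$ inside this $I$--bundle while tracking the boundary slopes, so as to conclude that $P$ is boundary--parallel. A preliminary technical point is to verify that the boundary curves of $P$ really can be isotoped simultaneously off $\partial C_i$ and into $J_i$ without creating new intersections with $C_i$.
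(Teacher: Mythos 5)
Your route is genuinely different from the paper's, which is a short global argument: isotope all three components of $\partial P$ into the annulus $J_i$, observe that this gives a properly embedded planar surface in $E(K)$ with three boundary components parallel to $K$, rule that out by citing \cite{Gabai}, and dispatch compressible $P$ exactly as you do, by compressing to produce a disk bounded by a curve parallel to $K$ (so $K$ would be trivial). Your alternative, localizing the problem inside $H_i$, is sound through its first three steps: the incompressibility reduction is fine; the curves of $P\cap C_i$ can indeed be removed, since an essential intersection circle would be $K$-parallel on the $P$ side (hence null-homologous in $H_1(H_i)$, as it bounds $T_i\subset\partial H_i$) but a core of $C_i$ on the other side, whose class is $p[x]$ with $x$ the core of the companion solid torus --- here the claim $[\omega_i]\neq 0$ in $H_1(H_i)$, which you assert without proof, does need justification, and gets one from a Mayer--Vietoris computation for $H_i=(T\times I)\cup_{C_i}V_i$; and the parallelism hypothesis does identify the genus-two piece of Lemma \ref{tejano2} with a product $T\times I$ whose vertical annulus is $J_i$.

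The genuine gap is in your final step, and the mechanism you propose would fail as stated. First, boundary-compressing a pair of pants yields annuli (and, after further compressions, disks), never once-punctured tori, so ``the essential pieces obtained by maximal $\partial$-compression are horizontal once-punctured tori or vertical annuli'' cannot be right. Second, a $\partial$-compression arc may lie on the horizontal boundary $T\times\{0,1\}$ rather than on $J_i$, dragging components of $\partial P$ out of the vertical annulus; nothing in your outline controls this, and it is exactly where the bookkeeping breaks down. Third, the target conclusion ``$P$ is boundary-parallel'' is not available even in principle: as you yourself note, three core-parallel curves in $J_i$ never bound a pair of pants in $\partial(T\times I)$, so the reassembly step is asserting something false rather than completing a proof by cases. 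Fortunately the fact you need is true and has a short algebraic proof that replaces all of this (and makes your incompressibility step unnecessary): write $\pi_1(T\times I)=F=\langle a,b\rangle$, so that the core of $J_i$ represents $c=[a,b]$. The three components of $\partial P$ represent conjugates $g_jc^{\epsilon_j}g_j^{-1}$, $\epsilon_j=\pm 1$, and since they cobound $P$ their product is $1$ in $F$. All three lie in $N=[F,F]$, which is $\pi_1$ of the $\mathbb{Z}^2$-cover of $T$ (the plane minus a lattice of punctures), and $H_1(N)$ is free abelian on the puncture classes $e_q$, $q\in\mathbb{Z}^2$, with $[g_jc^{\epsilon_j}g_j^{-1}]=\epsilon_j e_{\overline{g}_j}$. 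The relation $\epsilon_1e_{q_1}+\epsilon_2e_{q_2}+\epsilon_3e_{q_3}=0$ with all $\epsilon_j=\pm1$ is impossible: distinct basis vectors cannot cancel, and coincident ones give an odd coefficient. With this substitution your argument becomes a complete, self-contained alternative to the paper's appeal to Gabai's theorem.
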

\begin{proof}
Suppose $H_i$ contains a properly embedded pair of pants $P$, with boundaries all parallel to $K$. Up to isotopy, we can assume that  $\partial P$ is contained in $J_i$. Then there is planar surface with three boundary components properly embedded in $E(K)$, which is not possible by \cite{Gabai}, unless $P$ is compressible. But if this happens, by compressing $P$ we get that $K$ is the trivial knot, which is not possible.
\end{proof}

\begin{lema}\label{pants}
Suppose that $P$ is an incompressible pair of pants properly embedded in $H_i$, whose boundary is contained in $T_i \sqcup T_{i+1}$. Suppose that two of its boundary components are essential, non-boundary parallel, and non power curves on $T_{i+1}$. Suppose that the third boundary component lies on $T_i$. Then this last curve is parallel on $T_i$ to $\partial T_i$.
\end{lema}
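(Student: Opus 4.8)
The plan is to reduce the pair of pants $P$ to an incompressible annulus by a single boundary compression, and then to read off the slope of $\alpha$ from the classification of annuli in Lemma \ref{5anillos}. Before doing so I would record the relevant facts about the two boundary curves on $T_{i+1}$. Since $\beta_1,\beta_2$ are disjoint, essential, non--boundary-parallel simple closed curves on the once-punctured torus $T_{i+1}$, they are isotopic on $T_{i+1}$; write $\beta$ for their common slope. As $\beta$ is a non-power curve in $H_i$, Lemma \ref{tejano2} shows that $\beta$ bounds no companion annulus in $H_i$; and since $T_{i+1}$ is incompressible in $E(K)$, it is incompressible in $H_i$, so $\beta$ is essential there as well. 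The upshot I would use repeatedly is that, by Lemma \ref{5anillos}, no incompressible annulus in $H_i$ that is not boundary-parallel can have a boundary curve on $T_{i+1}$ of slope $\beta$: the listed annuli $C_i,C_i',A_i,A_i',\widehat{A_i}$ have their $T_{i+1}$--boundaries either of power slope $\omega_i'$ or parallel to $K$, and never equal to the essential non-power slope $\beta$.

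Next I would produce the boundary compression. Because $H_i$ is a handlebody, every incompressible, boundary-incompressible properly embedded surface in it is a disk; hence the incompressible surface $P$, not being a disk, must be boundary-compressible, and I fix a boundary-compression disk $D$ with $\partial D=\delta\cup\delta'$, $\delta\subset P$ an essential arc and $\delta'\subset\partial H_i$. The central claim is that $\delta$ joins $\beta_1$ to $\beta_2$. For any other arc type (an arc meeting $\alpha$, or a self-arc based at one $\beta_j$) the compression leaves a resulting piece that is an incompressible annulus carrying, on $T_{i+1}$, a boundary curve isotopic to $\beta$, while its complementary boundary runs through $\alpha\subset T_i$ and is therefore not a curve of $T_{i+1}$; such an annulus is not parallel into $T_{i+1}$, and its $T_{i+1}$--slope $\beta$ is forbidden by the observation of the previous paragraph. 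Hence $\delta$ joins $\beta_1$ to $\beta_2$, and compressing $P$ along $D$ yields an incompressible annulus $A$ with $\partial A=\alpha\cup\beta'$, where $\beta'$ is the band sum of $\beta_1$ and $\beta_2$ along $\delta'$.

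To pin down $\beta'$ I would cap $\beta_1,\beta_2$ with the sub-annulus $\overline{A}_\beta\subset T_{i+1}$ they cobound and push slightly into $H_i$, obtaining a once-punctured torus $Q=P\cup\overline{A}_\beta$ with $\partial Q=\alpha$. If $\delta'$ were contained in $\overline{A}_\beta$, then $\partial D\subset Q$ and $D$ would compress $Q$; but surgering a once-punctured torus along a compressing disk produces a disk bounded by $\alpha$, so $\alpha$ would bound a disk in $H_i$ and hence, by incompressibility of $T_i$, be trivial on $T_i$. This degenerate possibility I would exclude by noting that capping such a trivial $\alpha$ turns $P$ into an incompressible annulus with both boundaries of slope $\beta$, i.e. a companion annulus for the non-power curve $\beta$, contradicting Lemma \ref{tejano2}. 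Therefore $\delta'\not\subset\overline{A}_\beta$, the band sum runs through $T_{i+1}\setminus\overline{A}_\beta$ (or across $J_i$), and $\beta'$ is parallel to $\partial T_{i+1}$, hence to $K$. Then $A$ is an incompressible annulus one of whose boundaries is $K$-parallel; by Lemma \ref{5anillos} it is $\widehat{A_i}$ or boundary-parallel, and in either case its other boundary $\alpha$ is parallel to $K$ on $T_i$, that is, parallel to $\partial T_i$, as desired.

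The step I expect to be the main obstacle is the control of the boundary compression, namely the claim that $\delta$ must join $\beta_1$ to $\beta_2$. Carrying this out rigorously requires examining every alternative arc type and verifying that the resulting piece is a genuine incompressible annulus that is not parallel into $T_{i+1}$; the delicate case is a self-arc based at $\beta_1$ (or $\beta_2$) with $\delta'$ lying on $T_{i+1}$, where the two boundary curves of the resulting annulus might \emph{both} be pushed onto $T_{i+1}$, so that it could a priori be boundary-parallel rather than yielding the forbidden slope. Excluding that outcome, together with the clean disposal of the case that $\alpha$ is trivial, is exactly where Lemmas \ref{tejano2} and \ref{5anillos} (and, if needed for the $K$-parallel bookkeeping, Lemma \ref{3pK}) have to be applied with care.
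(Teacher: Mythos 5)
Your strategy differs from the paper's (which intersects $P$ with the companion annulus $C_i'$ and works with outermost arcs of $P\cap C_i'$), and the difference is precisely where your argument breaks: the central claim that the boundary-compression arc $\delta$ must join $\beta_1$ to $\beta_2$ is not established, and the argument you sketch for it is invalid. Your "forbidden slope" observation comes from Lemma \ref{5anillos}, which classifies only annuli whose \emph{entire} boundary lies in $T_i\sqcup T_{i+1}$. If $\delta$ runs from $\alpha$ to $\beta_1$, the band arc $\delta'$ necessarily crosses $J_i$, so the compressed piece is an annulus one of whose boundary curves is the band sum of $\alpha$ and $\beta_1$; in the situation you must rule out ($\alpha$ essential and not parallel to $\partial T_i$) this curve represents $[\alpha]\pm[\beta_1]$ in $H_1(\partial H_i)$, with nonzero projections to both sides of the separating curve $\partial T_i$, so it can \emph{never} be isotoped into $T_i\sqcup T_{i+1}$ and Lemma \ref{5anillos} says nothing about this annulus. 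The self-arc case with $\delta'\subset T_{i+1}$ you explicitly leave unresolved (the resulting annulus with both boundaries on $T_{i+1}$ may indeed be boundary-parallel, and then no contradiction arises; one must instead analyze what that parallelism says about $P$). Even in your "good" case, the parenthetical "(or across $J_i$)" hides the same defect: if $\delta'$ crosses $J_i$, the band sum $\beta'$ is not a curve on $T_{i+1}$ at all, and the final appeal to Lemma \ref{5anillos} collapses.

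A concrete symptom that these are not removable technicalities: your proof never uses the standing hypothesis that $\omega_i$ is a power of order $p\geq 3$. The paper's proof needs exactly this hypothesis at the corresponding stage: it takes the $\partial$-compression disk to be an outermost disk of $C_i'$ cut off by $P\cap C_i'$ (which automatically puts the compression arc on $\beta_1\cup\beta_2$ and the band on $\partial C_i'\subset T_{i+1}$, avoiding all the problems above), and in the remaining configuration, where all arcs of $P\cap C_i'$ are spanning, two consecutive arcs bound a disk in $P$ which is either a compressing disk for the punctured torus $(T_{i+1}\backslash\overline{C}_i')\cup C_i'$ (impossible) or a meridian disk of the companion solid torus — which would force the power curve to have order two, excluded only because $p\geq 3$. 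Since any complete proof must invoke this hypothesis somewhere and yours does not, the unresolved cases cannot all be dispatched by the slope bookkeeping you propose; the natural repair is to adopt the paper's device of extracting the boundary compression from $P\cap C_i'$ rather than from the abstract handlebody fact.
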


\begin{proof}
Consider the intersection between $P$ and the companion $C_i'$. Note that $P$ and $C_i'$ must intersect, and the intersection consists of arcs, for simple closed curves of intersection can be removed by the incompressibility of the surfaces. Suppose that $\gamma$ is an arc of intersection in $C_i'$, with endpoints lying in the same boundary component of $\partial C_i'$, and that $\gamma$ is outermost. So, $\gamma$ cuts off a disk $D$ in $C_i'$ which is a $\partial$-compression disk for $P$. By performing the boundary compression, we get an annulus, with one boundary component in $T_{i+1}$ which is parallel to $\partial T_{i+1}$. Then by Lemma \ref{3pK}, the other boundary component of the annulus, that is, the boundary component of $P$ lying in $T_i$, must be parallel to $\partial T_i$.

The remaining possibility is that all the arcs of intersection in $C_i'$ are essential, that is, spanning arcs in $C_i'$. There are at least two arcs of intersection, for there are at least four points of intersection between $\partial P$ and $\partial C_i'$. Take two consecutive arcs of intersection in $C_i'$, note that when considered in $P$, there are two possibilities for the arcs. A possible case is that each arc has endpoints on the same boundary component, but a different component from the other arc. In this case one of the arcs is trivial, and then there is a $\partial$-compression disk for $C_i'$, which is not possible. The other case is when both arcs join two different boundary components. Then there is a disk $D$ in $P$ formed by these two arcs plus one arc in each of the boundary components. Let $\overline{C}_i'$ be the annulus in $T_{i+1}$ bounded by $\partial C_i'$. One possibility is that $D$ is a compression disk for the torus $(T_{i+1}\backslash \overline{C}_i' )\cup C_i'$, which is not possible. The remaining possibility is that $D$ is a meridian disk of the companion solid torus of $H_i$, but this implies that the power curve in $H_i$ is a power of order two, which by hypothesis is not possible.
\end{proof}

\begin{lema}\label{4S}
The exterior of $K$, $E(K)$, cannot contain another genus one Seifert surface for $K$ non isotopic to any $T_{i}$; $i=1,\ldots, 4$.
\end{lema}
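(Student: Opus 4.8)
The plan is to argue by contradiction. Suppose $S$ is a genus one Seifert surface for $K$ that is not isotopic to any $T_i$. Since $K$ is hyperbolic (Lemma \ref{hyperbolic}) it is nontrivial, so every genus one Seifert surface, in particular each $T_i$ and $S$, is incompressible, and $E(K)$ is irreducible. I would first normalize $S$: isotope its boundary to a longitude lying in the interior of a single annulus $J_i\subset\partial\mathcal{N}(K)$, disjoint from every $\partial T_j$, and then put $S$ in general position with $\mathbb{T}=T_1\cup\cdots\cup T_4$ so that $S\cap\mathbb{T}$ is a union of simple closed curves, chosen to minimize $|S\cap\mathbb{T}|$. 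An innermost-disk argument, using incompressibility of $S$ and of the $T_j$ together with irreducibility, removes every intersection curve bounding a disk on either surface; minimality then guarantees that each remaining curve is essential on $S$ and on the $T_j$ containing it.

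The second step is to understand the pieces. Cutting $S$ along $S\cap\mathbb{T}$ produces pieces, each properly embedded and incompressible in one of the handlebodies $H_i$. Because $S$ is a once-punctured torus, any collection of disjoint essential simple closed curves on it consists of mutually parallel non-peripheral curves together with curves parallel to $\partial S$; consequently the pieces form a family of annuli plus exactly one \emph{core} piece of Euler characteristic $-1$, which is either a pair of pants or a once-punctured torus. The count $\sum\chi=\chi(S)=-1$, with $\chi\le 0$ for each essential piece, forces this dichotomy. Exactly one piece $P^{*}$ meets $\partial\mathcal{N}(K)$, along $\partial S\subset J_i$.

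The third step applies the classification lemmas locally. Every annulus piece with boundary on $T_j\sqcup T_{j+1}$ is, by Lemma \ref{5anillos}, isotopic to one of $C_j,C_j',A_j,A_j',\widehat{A_j}$, and by minimality none is boundary-parallel into a $T_j$. If the core piece is a pair of pants $P\subset H_i$, two of its boundary curves are parallel copies of one slope and the third is parallel to $\partial S$; Lemma \ref{3pK} forbids all three from being parallel to $K$, so the two copies are essential and non-peripheral on the $T_j$'s they lie on, and Lemma \ref{pants} (together with uniqueness of the companion annulus, Lemma \ref{tejano2}) constrains these slopes relative to the power curves. If the core is instead a once-punctured torus, its single boundary is either $\partial S$, in which case $S$ lies entirely inside one $H_i$ and I would show, from incompressibility and the handlebody structure, that an essential once-punctured torus in $H_i$ with longitudinal boundary is parallel to $T_i$ or $T_{i+1}$; or it is an intersection curve, in which case $S$ is that once-punctured torus capped off by a chain of spanning annuli and can be isotoped back into a single region, reducing to the previous case. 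Either way $S$ is forced to be isotopic to some $T_i$, against our assumption.

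The final and hardest step is the global reassembly. One must match the boundary slopes of all the pieces along each $T_j$ and close them into a single connected once-punctured torus running around the cyclic configuration $H_1\to H_2\to H_3\to H_4\to H_1$. The local classification restricts each piece to a short list, but many a priori ways to chain companion and spanning annuli around the cycle remain; ruling these out demands careful bookkeeping of slopes on each $T_j$, the use of Lemma \ref{noanillosconsecutivos} (no essential annulus survives across two consecutive regions) and, crucially, the hypothesis that $\omega_i'$ and $\omega_{i+1}$ meet transversely in a single point, which obstructs gluing a companion curve of $H_i$ to a companion curve of $H_{i+1}$. I expect the main obstacle to lie precisely here, and in the analysis of the boundary piece $P^{*}$, which is not directly covered by Lemma \ref{5anillos} because one of its boundary curves lies on $\partial\mathcal{N}(K)$; for $P^{*}$ I would combine an-annularity of the hyperbolic manifold $E(K)$ with the slope analysis on the adjacent $T_j$ to show that any annulus reaching $\partial\mathcal{N}(K)$ is inessential and can be absorbed to reduce $|S\cap\mathbb{T}|$. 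Completing this case analysis shows that the only assembly of the pieces into a genus one Seifert surface reproduces one of the $T_i$, the desired contradiction.
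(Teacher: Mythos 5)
Your overall strategy coincides with the paper's own proof (normalize $\partial S$, take a minimal essential intersection with $\mathbb{T}$, use $\chi(S)=-1$ to decompose $S$ into annuli plus one core piece, classify the pieces with Lemmas \ref{5anillos}, \ref{3pK}, \ref{pants}), but the two steps you defer are exactly where the content of the proof lies, and as written they are genuine gaps. First, the case in which $S$ lies in a single region $H_i$ (equivalently, your core piece is a once-punctured torus with boundary $\partial S$): you assert you ``would show, from incompressibility and the handlebody structure,'' that such a surface is parallel to $T_i$ or $T_{i+1}$, but you give no argument. This is not a routine consequence of incompressibility: a genus two handlebody can contain inequivalent essential once-punctured tori, and the paper disposes of this case by invoking Lemma 6.8 of \cite{Tejano}, a substantive result about the region between two genus one Seifert surfaces. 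Without that citation or a proof, this branch of your argument is open.

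Second, the ``global reassembly'' that you explicitly leave unfinished collapses much more drastically than your description (``many a priori ways to chain companion and spanning annuli around the cycle'') suggests, and carrying out that collapse is the heart of the paper's proof. After (i) isotoping away any annulus piece with one boundary curve on $\partial\mathcal{N}(K)$, (ii) ruling out intersection curves parallel to $\partial T_j$ on some $T_j$ (an annulus-compression would produce a planar surface with all boundary on $\partial\mathcal{N}(K)$, contradicting Lemma \ref{3pK}, which rests on \cite{Gabai}), and (iii) excluding adjacent annuli via Lemma \ref{noanillosconsecutivos}, the decomposition must consist of exactly one annulus $A$ and one pair of pants $P$. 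Moreover $A$ cannot be a spanning annulus: the pieces adjacent to it across its two boundary curves would lie in different regions, while only the single connected piece $P$ remains; hence $A$ is the companion annulus in some $H_i$ with both boundary curves on $T_i$ of slope $\omega_i$, and $P\subset H_{i-1}$ has its two cuffs on $T_i$. The hypothesis that $\omega_{i-1}'$ and $\omega_{i}$ meet transversely in one point then guarantees these cuffs are not power circles in $H_{i-1}$, so the pair-of-pants analysis (as in Lemma \ref{pants}) shows $P$ is parallel to $T_i$ minus an annulus, whence $S$ is isotopic to $T_{i+1}$, the desired contradiction. You correctly identified the relevant tools, but the case analysis you postpone \emph{is} the proof, not an afterthought. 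Finally, your proposed use of anannularity of the hyperbolic manifold $E(K)$ to handle the boundary piece $P^{*}$ is misdirected: hyperbolicity excludes essential annuli with both boundary components on $\partial E(K)$, whereas the pieces at issue have one boundary curve on an interior surface $T_j$; the paper removes such an annulus by a direct isotopy instead.
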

\begin{proof}
Suppose that $E(K)$ contains a fifth genus one Seifert surface for $K$, $S$, non isotopic to any $T_i$ ($i=1,\ldots, 4$). Isotope $S$ in order to make $\partial S$ and $\partial T_i$ disjoint for every $i=1,\ldots, 4$. 

If $S\cap\mathbb{T}=\emptyset$, then $S\subset H_i$ for some $i\in\{1,\ldots,4\}$. By Lemma $6.8$ of \cite{Tejano}, $S$ is parallel to $T_i$ or $T_{i+1}$.

Now, if $S\cap\mathbb{T}\neq\emptyset$, by the incompressibility of $S$ and $T_i$ ($i=1,\ldots, 4$) we can isotope $S$ to intersect transversely $\mathbb{T}$ in a minimal set of essential closed curves (in both $S$ and $T_i$; $i=1,\ldots, 4$). $\mathbb{T}$ divide $S$ into a collection of annuli, pair of pants, and once punctured tori. Suppose $\partial S \subset J_i \subset H_i$ for some $i\in\{1,\ldots,4\}$. 

If there is an annulus $A$ in $S$ with one boundary $\partial S$ and the other, $\partial_2 A$ in $T_{i+1}$, then $\partial_2 A$ is parallel to $K$ and we can isotope $S$ to eliminate such intersection. Then the curves of intersection in $S$, are not parallel to $\partial S$. If there is a curve of intersection between $T_i$ and $S$, which is parallel to $\partial T_i$ in $T_i$, then by doing an annulus compression to $S$, we get a pair of pants with its boundaries contained in $\partial \mathcal{N}(K)$, which is not possible, by the same argument as in Lemma \ref{3pK}. 
We can assume that $\mathbb{T}$ divide $S$ into a collection of annuli and a pair of pants. Note that in this collection, we can not have adjacent annuli, by Lemma \ref{noanillosconsecutivos}. 

In this way, $\mathbb{T}$ divide $S$ into an annulus $A$ and a pair of pants $P$. Since the annulus $A$ is parallel to the companion annulus, say in $H_i$, its boundary curves are the  power circle in $H_i$ and assuming that the pair of pants $P$ is contained in $H_{i-1}$, we have that in $H_{i-1}$, $P$ has two boundary curves in $T_i$ which are not power circles $H_{i-1}$ and one boundary curve parallel to $K$. It follows that $P$ is parallel to $T_{i}$ minus an annulus, then $S$ can be isotoped to be contained in $H_i$ and to be parallel to $T_{i+1}$, a contradiction.    
\end{proof}

\begin{lema}\label{morepants}
Suppose that $P$ is an incompressible pair of pants properly embedded in $H_i$, whose boundary is contained in $T_i$. Suppose that two of its boundary components are essential, non-boundary parallel on $T_i$, and that the third boundary component is parallel on $T_i$ to $\partial T_i$. Then either $P$ is parallel into $T_i$, or two boundary components of $\partial P$ are isotopic to the power curve of $H_i$, and $P$ union an annulus in $T_i$ is parallel into $T_{i+1}$.
\end{lema}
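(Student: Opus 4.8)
My plan is to reduce the statement to Lemma \ref{4S} by capping $P$ off to a genus one Seifert surface. First I would record that the two essential boundary components, say $c_1$ and $c_2$, are isotopic on $T_i$: on the once-punctured torus $T_i$ any two disjoint, essential, non-peripheral simple closed curves are parallel, so $c_1$ and $c_2$ cobound an annulus $\alpha\subset T_i$ whose complementary pair of pants contains $\partial T_i$. Let $\widehat P=P\cup\alpha$, where the interior of $\alpha$ is pushed slightly into $H_i$ so that $\widehat P$ is properly embedded; an Euler characteristic count gives $\chi(\widehat P)=-1$, and its single boundary curve $c_3$ is parallel to $\partial T_i$, so $\widehat P$ is a genus one Seifert surface for $K$ lying in $H_i$. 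Since $K$ has genus one, $\widehat P$ is of minimal genus and hence incompressible.

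Next I would apply Lemma \ref{4S}: $\widehat P$ must be isotopic to one of $T_1,\dots,T_4$, and since $\widehat P\subset H_i$, Lemma $6.8$ of \cite{Tejano} (used exactly as in the proof of Lemma \ref{4S}) forces $\widehat P$ to be parallel to $T_i$ or to $T_{i+1}$. If $\widehat P$ is parallel to $T_i$, then $P\subset\widehat P$ cobounds a sub-product with its image in $T_i$, so $P$ is parallel into $T_i$ and the first alternative holds.

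The substantive case is $\widehat P$ parallel to $T_{i+1}$; here I must show that the common slope of $c_1,c_2$ is that of the power circle $\omega_i$. Let $R\cong T_{i+1}\times I$ be the product region cobounded by $\widehat P$ and $T_{i+1}$. The curve $c_1\subset\widehat P$ is isotopic in $H_i$ to a curve $\gamma\subset T_i$ of the common slope of $c_1,c_2$, through the small annulus $A_0$ used to push $\alpha$ off $T_i$. Inside $R$ the vertical annulus over $c_1$ runs from $c_1\subset\widehat P$ to a curve on $T_{i+1}$; concatenating it with $A_0$ produces a spanning annulus $A'$ in $H_i$ with one boundary the essential curve $\gamma\subset T_i$ and the other a curve on $T_{i+1}$. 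Because $\gamma$ is essential and non-peripheral on $T_i$, the surface $A'$ is incompressible (using that $T_i$ is incompressible) and cannot be parallel into $\partial H_i$, since a boundary-parallel spanning annulus would force $\gamma$ to be peripheral. By Lemma \ref{5anillos}, $A'$ is therefore isotopic to one of the spanning annuli $A_i$ or $A_i'$, whose boundary curve on $T_i$ is isotopic to $\omega_i$. Hence $\gamma$, and so $c_1$ and $c_2$, are isotopic to $\omega_i$, the annulus $\alpha$ is isotopic to $\overline C_i$, and $\widehat P=P\cup\overline C_i$ is parallel to $T_{i+1}$, which is the second alternative.

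I expect the main obstacle to be this last case: producing the spanning annulus $A'$ cleanly from the product region $R$ and, above all, verifying that it is essential so that Lemma \ref{5anillos} applies and pins down the slope of $c_1,c_2$ as that of $\omega_i$. Once the slope is identified, the identification $\alpha\simeq\overline C_i$ and the parallelism $P\cup\overline C_i\parallel T_{i+1}$ follow from the defining property that $(T_i\setminus\overline C_i)\cup C_i$ is parallel to $T_{i+1}$.
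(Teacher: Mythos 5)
Your proof is correct, but it takes a route that is partly different from the paper's, so a comparison is worth recording. The paper splits into two cases at the outset according to whether the two essential boundary components are isotopic to the power curve $\omega_i$: in the non-power case it intersects $P$ with the spanning annulus $A_i$ (whose $T_i$-boundary is $\omega_i$, so it must meet $\partial P$), takes an outermost arc of $P\cap A_i$ on $A_i$ to produce a $\partial$-compression of $P$, and concludes after compressing that $P$ is parallel into $T_i$; only in the power case does it cap $P$ off with the annulus in $T_i$ and invoke Lemma \ref{4S}. You instead cap off unconditionally (justified by your observation that two disjoint, essential, non-peripheral curves on a once-punctured torus are parallel), obtain the dichotomy ``parallel to $T_i$ or to $T_{i+1}$'' from Lemma \ref{4S} together with Lemma 6.8 of \cite{Tejano}, and then, in the $T_{i+1}$-parallel case, \emph{derive} the power-curve condition a posteriori: you build a spanning annulus $A'$ from the push-off annulus and a vertical annulus in the product region, check it is essential, and use the classification of Lemma \ref{5anillos} to force its $T_i$-slope, hence the slope of $c_1,c_2$, to be $\omega_i$. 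So the capping-plus-Lemma-\ref{4S} core is shared, but where the paper uses a hands-on boundary-compression argument against $A_i$ to dispose of non-power slopes, you use the annulus classification to show non-power slopes simply cannot occur in the $T_{i+1}$-parallel branch. Your version is logically more uniform and concentrates the work in already-established classification lemmas, at the cost of the essentiality verification for $A'$ (which you correctly identify as the delicate point, and which does go through: incompressibility follows since the core is essential on the incompressible $T_i$, and boundary-parallelism of $A'$ would force the cobounding annulus in $\partial H_i$ to contain $\partial T_i$ as a core-parallel curve, making $\gamma$ peripheral). Two small points that you, like the paper, gloss over: orientability of $\widehat P$ (it follows because $c_3$ is null-homologous mod $2$ in $\partial H_i$, so $\widehat P$ is separating, hence two-sided, hence orientable), and the step from ``$\widehat P$ parallel to $T_i$'' to ``$P$ parallel into $T_i$,'' which needs the product structure to be adjusted so that the capping annulus corresponds to $\alpha$; both are at the level of rigor the paper itself adopts.
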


\begin{proof} If two boundary components of $P$ are not the power curve of $H_i$, then $P$ must intersect the annulus $A_i$. By looking at an outermost arc of intersection in $A_i$ we get that $P$ is $\partial$-compressible. By $\partial$-compressing $P$ we get an annulus parallel to $T_i$, and hence $P$ must be parallel into $T_i$. If two boundary components of $P$ are isotopic to the power curve of $H_i$. These curves bound an annulus $C$ on $T_i$, such that $P\cup C$ is a once-puncured tori, which by Lemma \ref{4S} is either parallel to $T_i$, or to $T_{i+1}$.
\end{proof}

\begin{teo}\label{principal}
$E(K)$ cannot admit a circular decomposition of width $\{3\}$.
\end{teo}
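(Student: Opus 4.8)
The plan is to suppose, for contradiction, that $E(K)$ admits a circular thin decomposition $\mathcal{D}$ of width $\{3\}$ and to confront its unique levels with the surfaces $T_i$ and the structure of the regions $H_i$. Since $|cw(\mathcal{D})|=1$, there is a single thin level $F$ and a single thick level $S$, joined by one $1$-handle and one $2$-handle, so that the compression body $A$ with $\partial_-A=F$ is $F\times I$ with one $1$-handle attached, and similarly for $G$. Attaching a single $1$-handle drops the Euler characteristic of the capped surface by two; since $c(S)=3$ forces $S$ to be a genus two Seifert surface ($\chi(\overline S)=-2$), the thin level $F$ is a genus one Seifert surface ($\chi(\overline F)=0$). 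By Theorem \ref{Inc}, $F$ is incompressible, so Lemma \ref{4S} makes $F$ isotopic to one of the $T_i$; after relabeling assume $F=T_1$. Cutting $E(K)$ along $T_1$ produces $M=H_1\cup H_2\cup H_3\cup H_4$, glued along $T_2,T_3,T_4$, with two copies $R_\pm$ of $T_1$ on its boundary, and $S$ becomes a strongly irreducible (Theorem \ref{Inc}) genus two Heegaard surface for $M$ splitting it into the compression bodies $A\ni R_-$ and $G\ni R_+$, disjoint from $T_1$.

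Next I would show that $S$ meets every internal surface. Each $T_j$ ($j=2,3,4$) is incompressible and, being a minimal genus Seifert surface, $\partial$-incompressible in $M$. If $S$ were disjoint from some $T_j$, then $T_j$ would lie entirely in $A$ or $G$; but an incompressible, $\partial$-incompressible surface in a compression body is parallel into $\partial_-$, which would make the genus one $T_j$ parallel to $T_1$, contradicting non-parallelism. Hence $S\cap T_j\neq\emptyset$ for all $j$. Isotoping $S$ to minimize $|S\cap\mathbb{T}|$ (with $\mathbb{T}=T_2\sqcup T_3\sqcup T_4$) and invoking strong irreducibility, I would arrange that $S\cap\mathbb{T}$ is essential in both $S$ and $\mathbb{T}$ and that each piece $S\cap H_i$ is incompressible in $H_i$, so that every piece is an incompressible annulus or an incompressible pair of pants (an Euler characteristic count against $\chi(S)=-3$ bounds the number of pair-of-pants pieces), together with one distinguished piece carrying $\partial S$ on some $J_i$.

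The combinatorial heart is then the classification and reassembly of these pieces. By Lemma \ref{5anillos} every annulus piece is one of the five listed annuli $C_i,C_i',A_i,A_i',\widehat{A_i}$ of the relevant region; by Lemma \ref{noanillosconsecutivos} two annulus pieces cannot be stacked across a single $T_j$; and by Lemmas \ref{pants} and \ref{morepants} each pair-of-pants piece is either boundary parallel (allowing an isotopy that reduces $|S\cap\mathbb{T}|$, against minimality) or completes, with an annulus in $T_j$, to a once-punctured torus parallel to an adjacent $T_j$. Tracking the gluings around the cycle $H_1H_2H_3H_4$, I would argue that every consistent assembly of a connected genus two $S$ either exhibits a once-punctured torus parallel to some $T_j$ — which by Lemma \ref{4S} forces $S$ to be isotopic into a single region and compress to genus one, contradicting that $S$ is the genus two thick level — or produces two compressing disks for $S$ on opposite sides, supported in two different regions and hence with disjoint boundaries, contradicting the weak incompressibility of $S$ guaranteed by Theorem \ref{Inc}. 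Either outcome is a contradiction, so no width $\{3\}$ decomposition exists.

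I expect the last paragraph to be the main obstacle: controlling the distinguished piece that carries $\partial S$, ruling out stray once-punctured-torus pieces, and verifying that every prohibited configuration genuinely yields either a forbidden parallel torus (Lemma \ref{4S}) or two disjoint opposite-side compressing disks, all while keeping the bookkeeping honest across the four regions. The hypotheses that the power satisfies $p\geq 3$ and that consecutive power circles $\omega_i',\omega_{i+1}$ meet transversely in a single point — already decisive in Lemmas \ref{pants} and \ref{noanillosconsecutivos} — should be precisely what blocks the degenerate assemblies.
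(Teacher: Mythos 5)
Your opening is sound and matches the paper's strategy: the thin level of a width $\{3\}$ decomposition is an incompressible genus one Seifert surface, hence (Lemma \ref{4S}) one of the $T_i$, say $T_1$; the thick level $\Sigma$ is a weakly incompressible genus two Seifert surface (Theorem \ref{Inc}) that must meet $T_2$, $T_3$, $T_4$. (The paper gets this last point more cleanly, by isotoping the core disk of the $2$-handle off $T_4$ and compressing $\Sigma$ to a genus one surface missing $H_4$; your compression-body argument also needs a relative version of ``incompressible and $\partial$-incompressible implies parallel into $\partial_-$'' that accounts for the vertical boundary annuli, but this is repairable.) The first genuine gap is your classification of the pieces of $\Sigma\backslash\mathbb{T}$: you assert every piece is an annulus or a pair of pants, with an Euler characteristic count doing the work. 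That is false. $\chi(\Sigma)=-3$ only bounds total complexity; the essential curves of $\Sigma\cap\mathbb{T}$ can cut $\Sigma$ into four-punctured spheres ($\chi=-2$), once-punctured tori, and twice-punctured tori, and these actually occur. The paper's enumeration yields six configurations, and four of them (its Cases 3--6: $A_4.S_3.\{A_2,A_2'\}.P_1$, $A_4.P_3.A_2.S_1.A_2'$, $A_4.P_3.S_2.A_1$, $A_4.P_3.A_2.T_1'$) contain exactly the pieces your list excludes. A reassembly argument starting from your restricted list cannot even see these configurations, much less rule them out.

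The second, and larger, gap is that the combinatorial heart is not carried out: you state a strategy and concede it is the main obstacle, and the dichotomy you propose --- every assembly yields either a once-punctured torus parallel to some $T_j$, or two compressing disks on opposite sides with disjoint boundaries --- is not the mechanism that actually eliminates the configurations, nor is there evidence it can be made to work. The paper first proves three internal lemmas shaping the case list (no component of $\Sigma_4$ is a four-punctured sphere; punctured-torus and pair-of-pants pieces in $H_4$ can be traded for an annulus; the resulting annulus $A_4$ is non-separating in $\Sigma$), and then kills each of the six cases by a one-sided argument: take a compressing disk $D$ for $\Sigma$ on the external side, pass to an innermost arc of $D\cap\mathbb{T}$, and derive a contradiction piece by piece --- a compression disk for a surface parallel to some $T_i$, a $\partial$-compression of a four-punctured sphere producing a pair of pants with all boundaries parallel to $K$ (forbidden by Lemma \ref{3pK}, which rests on Gabai), or an isotopy contradicting minimality of the number of pieces. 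In particular, in Cases 4 and 5 the contradiction comes precisely from $\partial$-compressing four-punctured spheres, pieces your plan does not admit. So the part of the theorem that genuinely requires work --- establishing the correct finite list of configurations and eliminating each one --- is missing from your proposal.
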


\begin{proof}
We are assuming that $K$ is a knot whose exterior contains $4$ mutually disjoint and non-parallel genus one Seifert surfaces $T_{i}$; $i=1,\ldots, 4$, and that $\mathbb{T}=\sqcup_{i=1}^4 T_i$.
Suppose $E(K)$ has a circular decomposition of width $\{3\}$. So, 
$$
E(K)=(T_{1}\times I)\cup N\cup B/ T_{1}\times 0\sim T_{1}\times 1
$$ where $N$ and $B$ stand for a one handle and a two handle, respectively.

By Lemma \ref{4S}, $T_{i}$ ($i=1,\ldots, 4$) are the only genus one Seifert surfaces for $K$. 
 
 Let $\Sigma:=\partial ((T_{1}\times I)\cup N)$. Then  $\Sigma$ is a genus two Seifert surface for $K$. By Theorem \ref{Inc}, $\Sigma$ is weakly incompressible.
 
 We note that $\Sigma\cap T_{i}\neq\emptyset$ for $i\in\{2,3,4\}$. To show this, suppose that  $\Sigma\cap T_{4}=\emptyset$. Let $D$ the core disk of $B$. By the incompressibility of $T_{4}$, we can isotope $D$ to be disjoint of $T_{4}$. By compressing $\Sigma$ along $D$ we obtain a genus one surface which does not intersect the region $H_4$ and then cannot be isotopic to $T_{1}$, a contradiction.
 
 Without lost of generality,  suppose that $\partial\Sigma\subset H_{1}$ or $\partial\Sigma\subset H_{2}$. By general position, $\Sigma\cap T_{i}$ for $i\in\{2,3,4\}$ consists of simple closed curves. Moreover, as $\Sigma$ is weakly incompressible, $\Sigma\cap T_{i}$ consists of simple closed curves which are essential on both $\Sigma$ and $T_{i}$, by Lemma 2.6 of \cite{Sch}. We assume that $\Sigma$ has been isotoped such that the number of connected components of $\Sigma \backslash \mathbb{T}$, denoted $c(\Sigma)$, is minimal.
 $\mathbb{T}$ divides $\Sigma$ into a collection of  annuli, pair of pants, four punctured spheres, once punctured tori, and twice punctured tori. Let $\Sigma_i=\Sigma \cap H_i$ for $i\in\{1,2,3,4\}$.

If there is an annulus in $\Sigma$ with one boundary in $\partial\mathcal{N}(K)$ and the other in $T_{2}$ or $T_3$, isotope $\Sigma$ to eliminate such intersection. Also, by Lemma \ref{noanillosconsecutivos}, there cannot be adjacent annuli on $\Sigma$ with boundary slope a power circle in $H_{i}$.

\begin{lema} No component of $\Sigma_4$ is a 4-punctured sphere.
\end{lema}

\begin{proof} Suppose that a component of $\Sigma_4$ is a 4-punctured sphere $S_4$. Then $\partial S_4 \subset T_4$. As $\Sigma$ is a genus 2 surface, the complementary regions of $\Sigma_4$ in $\Sigma$ are an annulus $A$ and a pair of pants $P$. Note that $A$ must be contained in $H_3$, and it is a companion annulus there,  otherwise we can push it into $H_4$, reducing $c(\Sigma)$. $P$ is contained in the union of $H_3$, $H_2$ and $H_1$. $\partial P$ consists of $\partial \Sigma$ and two curves $\alpha _1$, $\alpha_2$. No curve of intersection of $P$ with $T_3$ and $T_2$ is parallel to $\partial \Sigma$. So $P\cap (T_3 \cup T_2)$ consists of curves parallel to $\alpha_1$ or $\alpha_2$. This implies that $P\cap H_3$ consists of annuli, and also $P\cap H_2$ consists of annuli. As we have two adjacent annuli these must be parallel to a neighborhood of the knot. That is, $\alpha_1$ is a curve parallel to $K$. If one of these annuli have its boundary components in the same $T_i$, an isotopy removes the intersection. If all annuli are spanning, then in $H_1$ there is a pair of pants, whose all boundary components are parallel to $K$, which contradicts Lemma \ref{3pK}.
\end{proof}

\begin{lema} If $\Sigma_4$ contains a once punctured torus $T$ or a pair of pants, then part of it can be isotoped through $T_4$, changing it into an annulus, without changing $c(\Sigma)$. 
\end{lema}

\begin{proof} If $\Sigma_4$ contains a once punctured torus $T$, then it has its boundary on $T_4$, which is parallel to $\partial T_4$, and then it is parallel to $T_1$. Part of $T$ can be pushed into $H_3$, except for an annulus, which remains in $H_4$. This is done without changing $c(\Sigma)$.

If $\Sigma_4$ contains a pair of pants $P$, then its boundary lies on $T_4$. If only one or the three boundary components of $P$ were essential curves on $T_4$, then there would be a simple closed curve in $T_4$ intersecting $P$ and then $\Sigma$ an odd number of times, which is not possible for any such closed curve has linking number $0$ with $K$. Then two boudary components of $P$ are essential curves on $T_4$ and the third one is parallel to $\partial T_4$. Then by Lemma \ref{morepants}, either $P$ can be pushed towards $H_3$, or $P$ union an annulus in $T_4$ is parallel into $T_1$. Then, as in the previous case, part of the pair of pants can be pushed into $H_3$, except for an annulus, which remains in $H_4$.
\end{proof}

We can assume that $\Sigma_4$ contains an annulus $A_4$, which is then a companion annulus in $H_4$.

\begin{lema} The annulus $A_4$ is a non-separating annulus in $\Sigma$.
\end{lema}

\begin{proof} Suppose that $A_4$ is a separating annulus in $\Sigma$. Then it separates $\Sigma$ into a once-punctured torus $T'$ and a twice punctured torus. If $T'$ is contained in $H_3$, then by looking at the intersections between $T'$ and the annulus $C_3$, we see that there is a boundary compression disk for $T'$ contained in $C_3$, but this contradicts the so called parity rule. The other possibility is that $T'\cap H_3$ is a pair of pants and $T'\cap H_2$ is an annulus. But this annulus has to be a companion annulus in $H_2$, and then the pair of pants contradicts Lemma \ref{pants}.
\end{proof}


We use the symbols $A$, $P$, $S$, $T'$ to denote
an annulus, a pair of pants, a four punctured sphere and a twice punctured torus, respectively. By a symbol $Q_i$ we mean a surface of type $Q$, so that $Q_i=\Sigma\cap H_i$. Then by a sequence $A_4.P_3.S_2.A_1$ we mean that $\Sigma$ intersects the handlebodies $H_i$ in an annulus, a pair of pants, a four punctured sphere, and an annulus, respectively. If we have a sequence $Q_4.Q_3.Q_2.Q_1$, we also mean that $Q_i\cap T_i=Q_{i-1}\cap T_{i}$.

\begin{figure}
\begin{center}
\includegraphics[scale=.34]{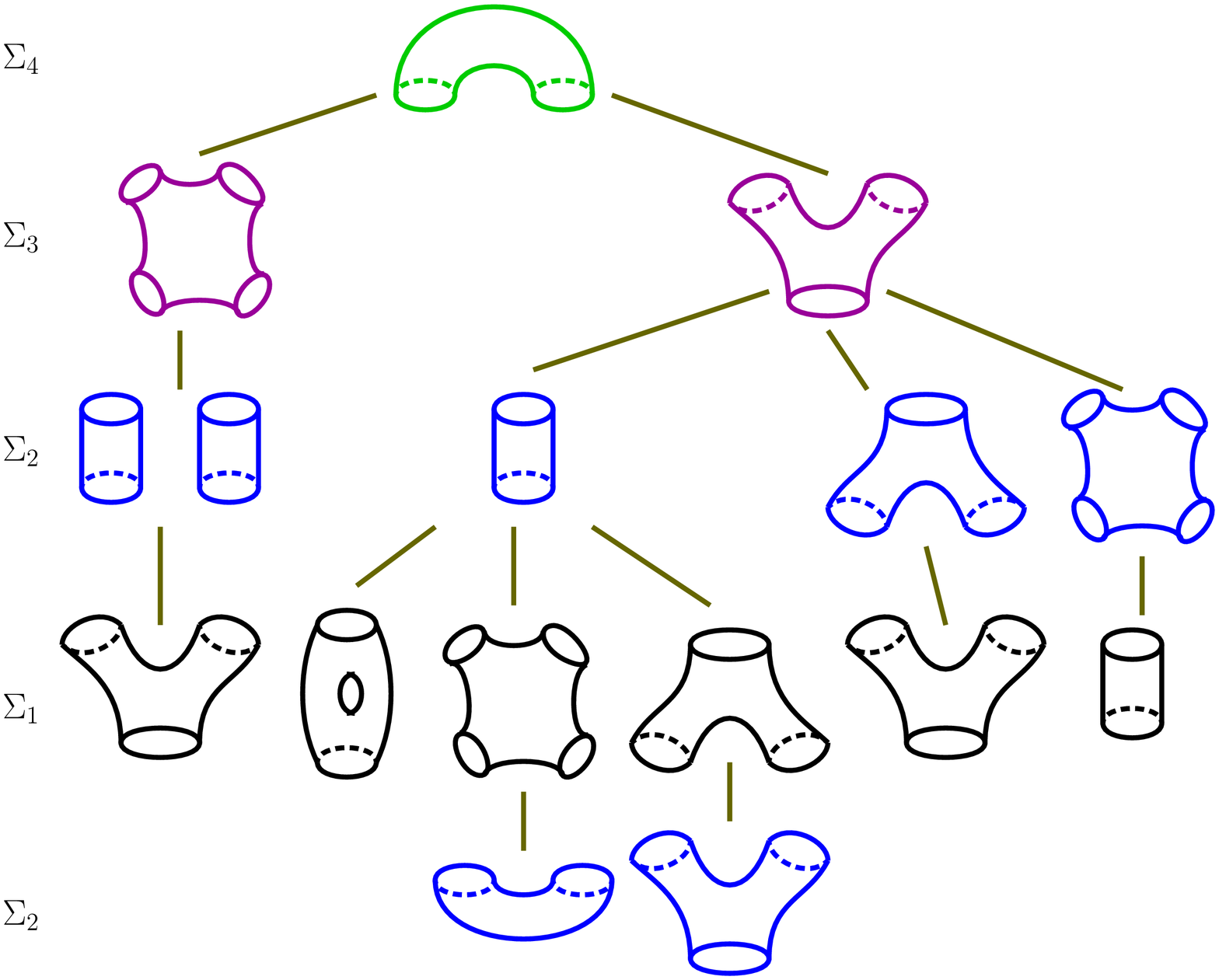}    
\end{center}
\caption{}
\label{allcases}
\end{figure}

\begin{lema} There are the following possibilities for the intersections of $\Sigma$ with the $H_i$.

\begin{enumerate}
    \item $A_4.P_3.P_2.P_1$
    \item $A_4.P_3.A_2.P_1.P_2$
    \item $A_4.S_3.\{ A_2,A_2'\}.P_1$
    \item $A_4.P_3.A_2.S_1.A_2'$
    \item $A_4.P_3.S_2.A_1$
    \item $A_4.P_3.A_2.T_1'$
    
\end{enumerate}
\end{lema}

\begin{proof} Note that $\chi(\Sigma)=\sum_{i=1}^{i=4} \chi(\Sigma_i)=-3$, and then the pieces of $\Sigma$ consist of 3 pair of pants, or a 4-punctured sphere and a pair of pants, or a twice punctured torus and a pair of pants, plus several annuli in each case. It is shown by inspection that the cases listed above are all possible cases. A schematic figure of all possible cases is shown in Figure \ref{allcases}. 

We have that $\Sigma_4$ is a non-separating annulus, which is a companion annulus in $H_4$, the region adjacent to it and contained in $H_3$, can not contain an annulus, by Lemma \ref{noanillosconsecutivos}. So, $\Sigma_3$ is either a pair of pants $P_3$ or a 4-punctured sphere $S_3$. Suppose first that $\Sigma_3=S_3$. What is left of $\Sigma$ is a pair of pants; note that $\Sigma_1$ can not contain an annulus containing $\partial \Sigma$, and then the only possibility is that $\Sigma_2$ consists of two annuli $A_2$ and $A_2'$, and $\Sigma_1$ consists of a pair of pants $P_1$. So we have Case 3.

Suppose now that $\Sigma_3=P_3$. What is left of $\Sigma$ is a twice punctured torus. Note that by Lemma \ref{pants}, the boundary of $P_3$ lying in $T_3$ is parallel to $\partial T_3$. If $\Sigma_2$ contains a pair of pants $P_2$, then $\Sigma_1$ must be a pair of pants $P_1$, so we have Case 1. If $\Sigma_2$ contains a 4-punctured sphere $S_2$, then it must contain $\partial \Sigma$, and $\Sigma_1$ is an annulus $A_1$. So, we have Case 5. If $\Sigma_2$ contains an annulus $A_2$ adjacent to $P_3$, what is left of $\Sigma$ is a twice punctured torus, and there are three possibilities. First, $\Sigma_1$ is a twice punctured torus $T_1'$, this is Case 6. Second, $\Sigma_1$ consists of a pair of pants $P_1$ and $\Sigma_2$ contains a pair of pants $P_2$, this is Case 2. And third, $\Sigma_1$ consists of a 4-punctured sphere $S_1$ and $\Sigma_2$ contains one more annulus $A_2'$, this is Case 4.
 \end{proof}

To finish the proof of Theorem \ref{principal}, we analyze each one of the cases and show that it can not arise.

\vskip10pt
\textbf{Case 1: $A_4.P_3.P_2.P_1$}.

\begin{figure}
\begin{center}
\includegraphics[scale=.44]{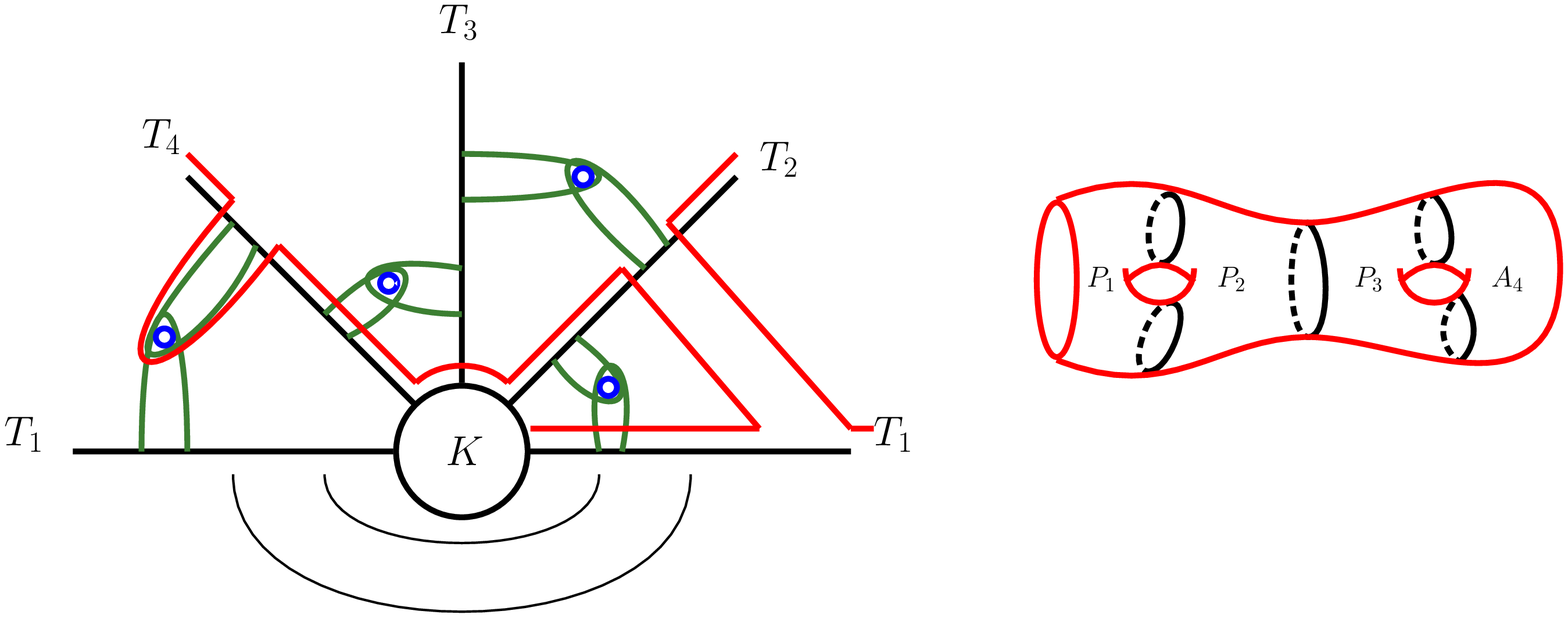}
\end{center}
\caption{}
\label{case1}
\end{figure}

In this case $A_4$ is a companion annulus in $H_4$, two of the boundary curves of $P_3$ are essential in $T_4$, and the other lies in $T_3$, and is boundary parallel, by Lemma \ref{pants}. One boundary curve of $P_2$ lies in $T_3$, and is boundary parallel there, the other two are essential curves in $T_2$. Finally, two of the boundary curves of $P_1$ are essential in $T_2$ and parallel to the boundary curves of the companion annulus in $H_1$ (for otherwise $P_1$ could be isotoped to lie in $H_2$) and the last one lies in $J_1$. This is case is schematically shown in Figure \ref{case1}. In this and the subsequent figures, the blue circles represent the companion tori, the green arcs the companion annuli, and the surface $\Sigma$ is represented by a red curve.

As $\Sigma$ is bicompressible, let $D$ be a compressing disk lying in the external side, that is, the side intersecting $\partial \mathcal{N}(K)$ in a single annulus lying in $H_1$. Consider $D\cap\mathbb{T}$.
Trivial closed intersection curves on $D$ can be eliminated by an isotopy. Then we can assume $D\cap\mathbb{T}$ consists only of arcs.

Take an innermost intersection arc $t$ on $D$, then there exists an arc $\sigma\subset\Sigma$ such that $\sigma\cup t$ cobound a disk $E\subset D$ and ${int}E\cap \mathbb{T}=\emptyset$.

\begin{itemize}

    \item If $t\subset T_4$, we have two options: $\sigma\subset H_4$ or $\sigma\subset H_3$.
    
Suppose $\sigma\subset H_4$. In $H_4$, the annulus $A_4$ is parallel to the companion annulus of $H_4$ and its boundary components lie in $T_4$. If $\sigma$ joins one boundary component of $A_4$ to itself, we can eliminate such intersection by an isotopy. Otherwise, $\sigma$ joins the two boundary components of $A_4$, giving rise to a boundary compression of this annulus, a contradiction.

Suppose now $\sigma\subset H_3$. In $H_3$, the pair of pants $P_3$ is parallel to $T_4\backslash A$ for an annulus $A\subset T_4$. Here $E$ is a compression disk for $P_3 \cup A$, which is a surface parallel to $T_4$, a contradiction.

    \item If $t\subset T_3$, we have two options: $\sigma\subset H_3$ or $\sigma\subset H_2$.

Suppose $\sigma\subset H_3$. As in the precedent case, we can think of $P_3$ in $H_3$ as $T_4\backslash A$ for an annulus $A\subset T_4$. Consider the region in $P_3$ parallel to $\partial E(K)$ between $\partial T_3$ and $\partial T_4$ which contains an annulus $A'$. In $A'$, two subarcs of $\sigma$ cobound a disk $F$ such that $E\cup F$ is an annulus. It means that $\sigma$ and $t$ can be completed to coannular closed curves $\hat{\sigma}$ and $\hat{t}$,  $\hat{\sigma}\subset T_4$ and $\hat{t}\subset T_3$. If the curve $\hat{\sigma}$ is parallel to the core of the companion annulus in $H_4$ so is $\hat{t}$, but this is not possible. Also, $\hat \sigma$ can not be parallel to the core of the companion annulus in $H_3$. Then  $\hat{\sigma}$ and $\hat{t}$ are parallel to $K$. This implies that we can eliminate the innermost arc in $D$ by an isotopy. 

Now if $\sigma\subset H_2$, we follow the same argument as  before, since $P_2$ is isotopic to $T_2\backslash A$ where a core of the annulus $A$ has the slope of the companion annulus in $H_1$.

    \item If $t\subset T_2$, we have two options: $\sigma\subset H_2$ or $\sigma\subset H_1$. 

If $\sigma\subset H_2$ we proceed as in the first case when $t\subset T_4$ and $\sigma\subset H_3$.
Suppose now $\sigma\subset H_1$. $P_1$ is a pair of pants parallel to $T_1\backslash A$, where $A$ is an annulus parallel to an annulus in $T_2$ having $t$ as an essential arc. Then $E$ is a compression disk for $P_1\cup A$ which is parallel to $T_1$, a contradiction. 
\end{itemize}

{\bf{Case 2: $A_4.P_3.A_2.P_1.P_2$}}. 

\begin{figure}
\begin{center}
\includegraphics[scale=.46]{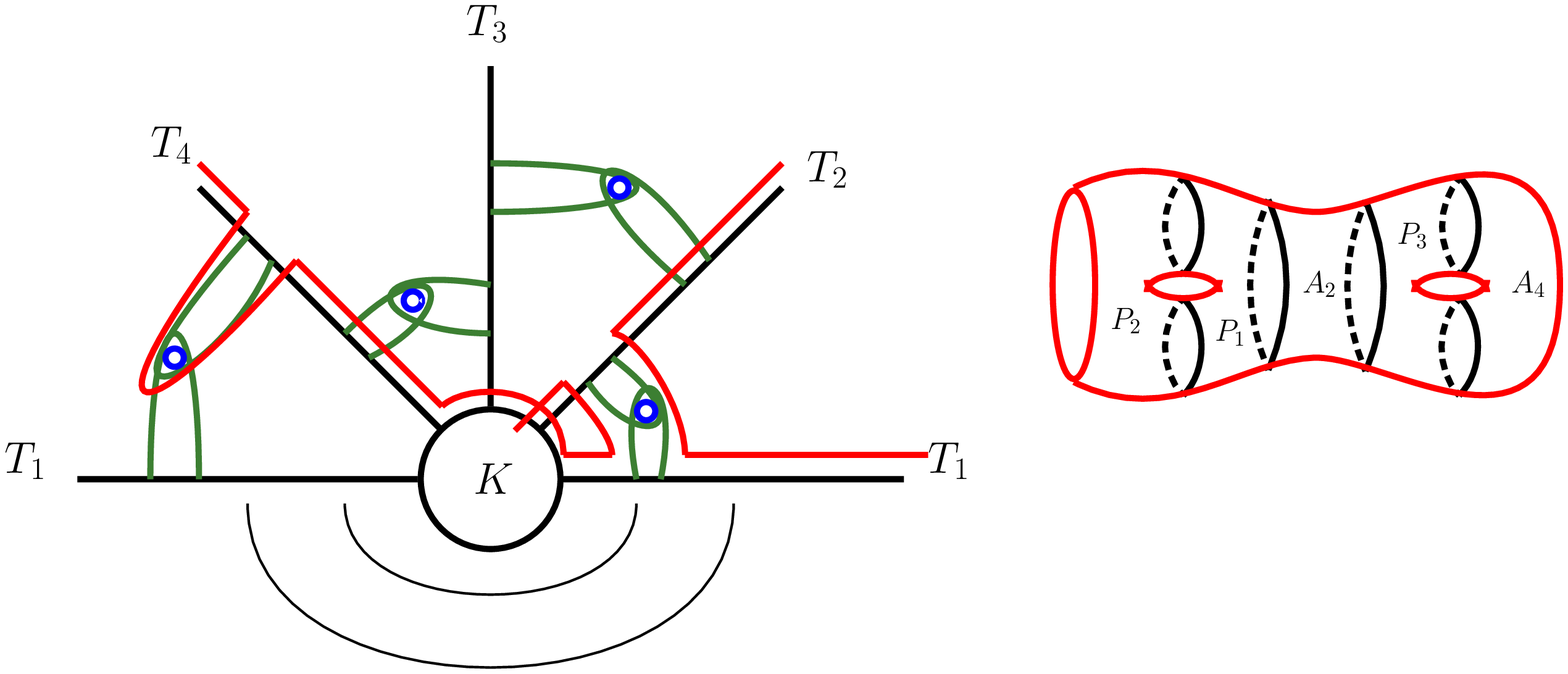}
\end{center}
\caption{}
\label{case2}
\end{figure}

This case is schematically shown in Figure \ref{case2}. In this case $\Sigma_2$ consists of an spanning annulus $A_2$ parallel to an annulus in $\partial \mathcal{N}(K)$ and a pair of pants $P_2$, whose boundary consists of two curves on $T_2$ and $\partial \Sigma$. Then $\Sigma$ would have a self-intersection, which is impossible.

\vspace{1cm}

{\bf{Case 3: $A_4.S_3.\{A_2,A_2'\}.P_1$}}.

\begin{figure}
\begin{center}
\includegraphics[scale=.46]{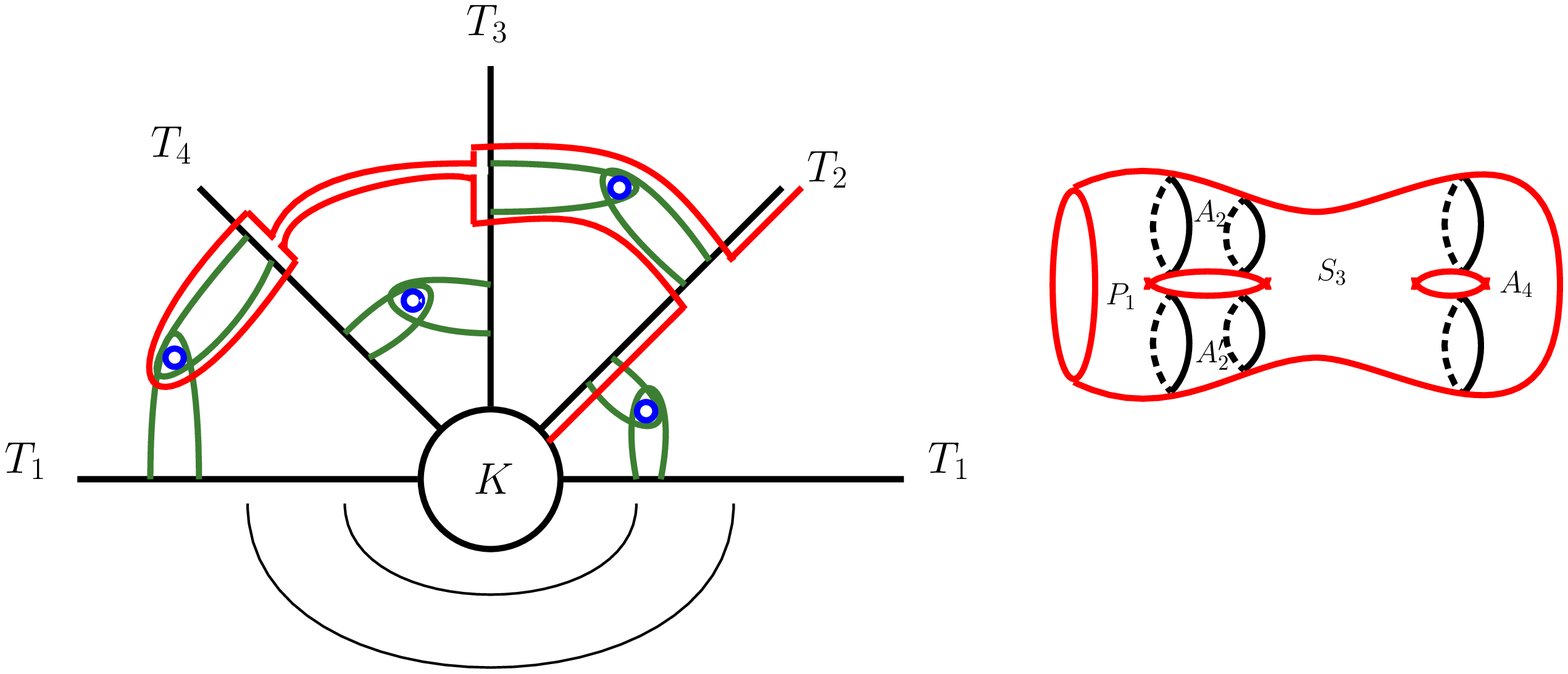}
\end{center}
\caption{}
\label{case3}
\end{figure}

This case is schematically shown in Figure \ref{case3}. In this case $\Sigma_2$ consists of the annuli $A_2$ and $A_2'$, which are spanning annuli in $H_2$, whose boundary curves are power circles in $H_2$. It follows that $P_1$ is a pair of pants whose boundaries are two curves in $T_2$, which are primitive circles in $H_1$ and one curve parallel to $K$, then it is possible to push $P_1$ into $H_2$ to eliminate $\Sigma\cap H_1$, a contradiction. 

\vspace{1cm}

{\bf{Case 4: $A_4.P_3.A_2.S_1.A_2'$}}. 

\begin{figure}
\begin{center}
\includegraphics[scale=.46]{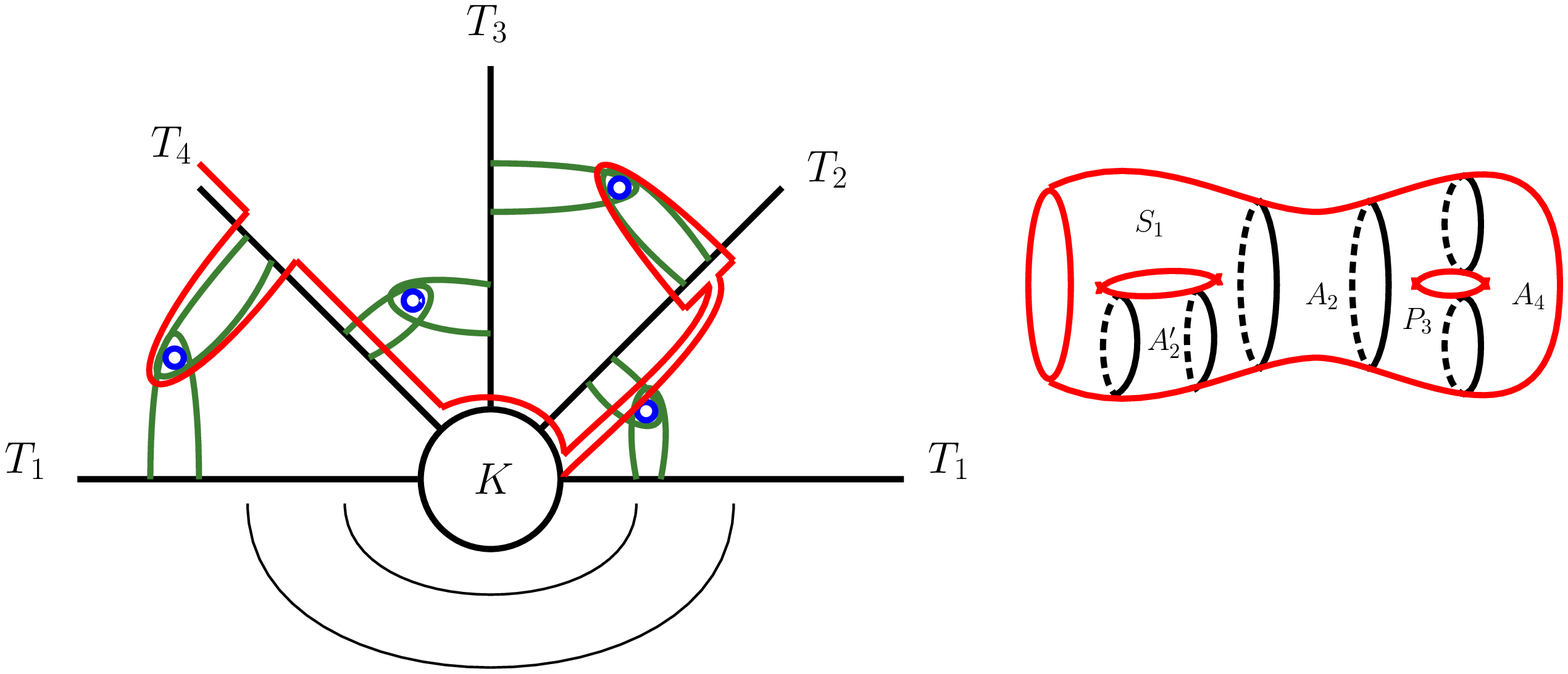}
\end{center}
\caption{Case 4}
\label{case4}
\end{figure}


This case is schematically shown in Figure \ref{case4}. In this case $\Sigma_2$ two consists of two annuli, $A_2$ and $A_2'$, where $A_2$ is a boundary parallel annulus and $A_2'$ is parallel to the companion annulus in $H_2$ with boundary on $T_2$, and then $S_1$ is a 4-punctured sphere.

As in Case 1, let $D$ be a compressing disk for $\Sigma$ lying in the external side, such that $D\cap\mathbb{T}$ consists only of arcs. As before, take an innermost intersection arc $t$ on $D$, and an arc $\sigma\subset\Sigma$ such that $\sigma\cup t$ cobound a disk $E\subset D$ and ${int}E\cap \mathbb{T}=\emptyset$. 
Note that the cases where $t\subset T_4$ or $t\subset T_3$ and $\sigma\subset H_3$ can be eliminated as in case $1$. 

Now suppose that $t\subset T_3$ and $\sigma\subset H_2$. Remember that  $\Sigma_2$ is a disjoint union of $\Sigma_2=A_2\cup A_2'$, where $A_2$ is parallel to the annulus $J_2$ and $A_2'$ is parallel to the companion annulus in $H_2$, with its boundary lying in $T_2$. Then $\sigma\subset A_2$, but this is implies that $\sigma$ is not essential, a contradiction. 

Now suppose that $t\subset T_2$ and $\sigma \subset H_2$. Then $\sigma \subset A_2'$. Let $A$ be an annulus in $T_2$ with $\partial A=\partial A_2'$. Then $t\cup \sigma$ lies in $(T_2\backslash A)\cup A_2'$, which implies that $E$ is a compression disk for a surface parallel to $T_3$, which is not possible.

So we have that $t\subset T_2$ and $\sigma\subset H_1$. The four-punctured sphere $S_1$ has two boundary slopes $\partial_1 S_1$, $\partial_2 S_1$ which are primitive curves in $H_1$ and two boundary slopes parallel to $K$, one lying in $T_2$ ($\partial_3 S_1$) and the other in $\partial\mathcal{N}(K)$ ($\partial_4 S_1$). By making a $\partial$-compression in $S_1$ with the disk $E$ we obtain a pair of pants or an annulus and a pair of pants. We have the following cases:

\begin{itemize}
    \item If $t$ joins $\partial_1 S_1$ and $\partial_2 S_1$. By making a $\partial$-compression with $E$, we obtain a pair of pants whose boundaries are all parallel to $K$, which is not possible, by Lemma \ref{3pK}.
    
    \item If $t$ joins $\partial_1 S_1$ (or $\partial_2 S_1$) with $\partial_3 S_1$. By making a $\partial$-compression with $E$, we obtain a pair of pants with boundary components $\partial _2 S_1$, $\partial_4 S_1$ and one more curve parallel to $\partial_2 S_1$. As $\partial_2 S_1$ is a primitive curve in $H_1$, then the pair of pants is parallel into $T_2$.  It follows that it is possible to isotope $S_1$ to lie in $H_2$, a contradiction.
    
    \item If $t$ joins $\partial_1 S_1$ or $\partial_2 S_1$ with itself. By making a $\partial$-compression with $E$, we obtain an annulus and a pair of pants which are parallel into $T_2$ and which are possible to isotope to lie in $H_2$, a contradiction.
    
    \item If $t$ joins $\partial_3 S_1$ with itself. By making a $\partial$-compression with $E$, we obtain an annulus and a pair of pants which are parallel into $T_2$ and which are possible to isotope to lie in $H_2$, a contradiction.
\end{itemize}

\vspace{1cm}

{\bf{Case 5: $A_4.P_3.S_2.A_1$}}.

\begin{figure}
\begin{center}
\includegraphics[scale=.46]{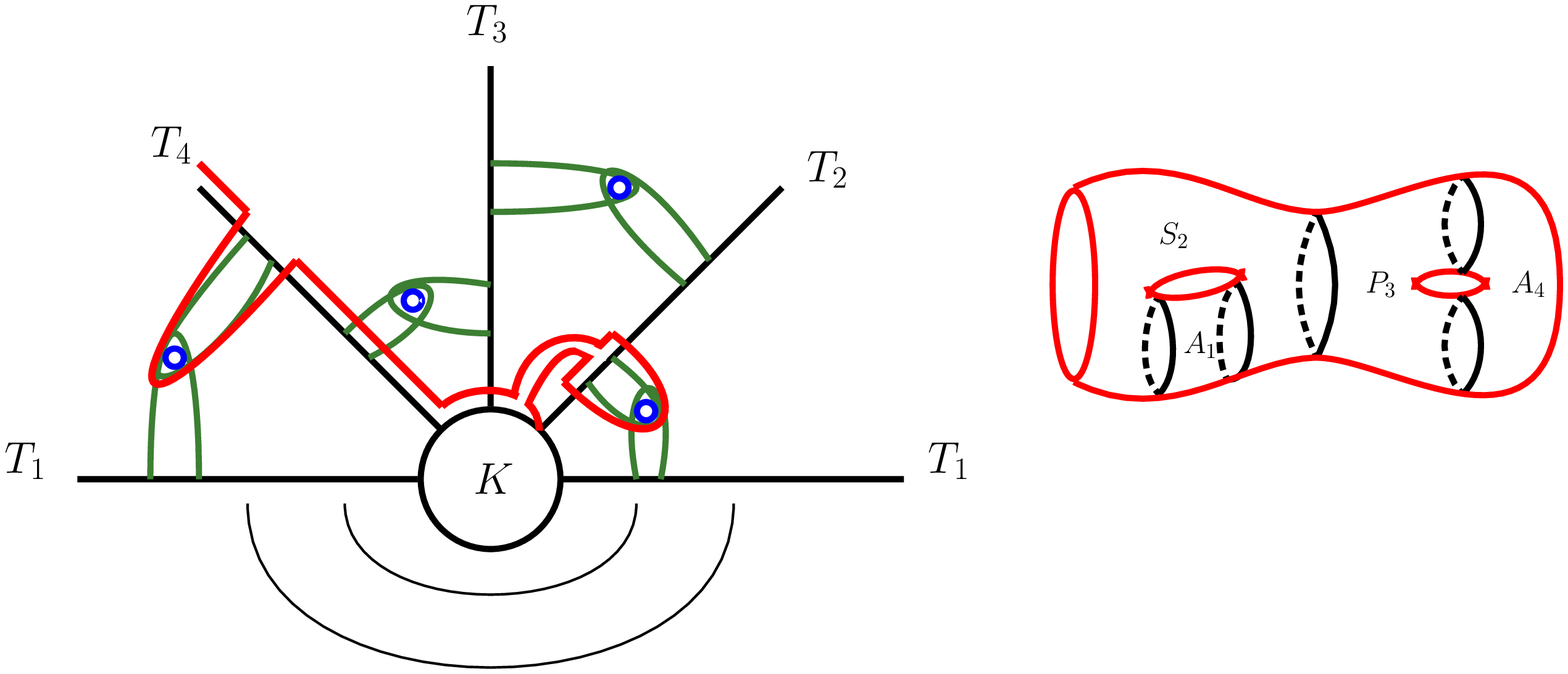}
\end{center}
\caption{}
\label{case5}
\end{figure}


This case is schematically shown in Figure \ref{case5}. Led $D$, $E$, $t$ and $\sigma$ be as in Case 1.
Note that the cases where $t\subset T_4$ or $t\subset T_3$ and $\sigma\subset H_3$ can be eliminated as in Case $1$.

The four-punctured sphere $S_2$ has two boundary curves $\partial_1 S_2$, $\partial_2 S_2$, which lie in $T_2$ and are primitive curves in $H_2$ and two boundary curves parallel to $K$, one lying in $T_3$ ($\partial_3 S_2$) and the other in $\partial\mathcal{N}(K)$ ($\partial_4 S_2$).

Now we assume $t\subset T_3$ and $\sigma\subset H_2$. Then $t$ is an arc joining $\partial_3 S$ with itself. By making a $\partial$-compression in $S_2$ with $E$, we obtain an annulus and a pair of pants. The boundary of the annulus consists of an essential curve in $T_3$ and either $\partial_1 S_2$, $\partial_2 S_2$ or $\partial_4 S_2$. In any case, this is in contradiction with Lemma \ref{5anillos}.

Now suppose, $t\subset T_2$ and $\sigma\subset H_2$. If $t$ joins $\partial_1 S_2$ and $\partial_2 S_2$, then by doing a $\partial$-compression on $S_2$ with $E$, we obtain a pair of pants whose all boundaries are parallel to $K$, a contradiction to Lemma \ref{3pK}. Now, if $t$ joins $\partial_1 S_2$ (or $\partial_2 S_2$) to itself, by doing a $\partial$-compression to $S_2$ we obtain an annulus $A$ and a pair of pants $P$. The curve $\partial_1 S_2$ is split into two curves, say $\partial_{11}$ which is parallel to $\partial T_2$, and $\partial_{12}$ which is parallel to $\partial_2 S_2$. If the annulus $A$ has $\partial_{12}$ as a boundary component, then the other boundary component can not be $\partial_3 S_2$ or $\partial_4 S_2$, and then it has to be $\partial_2 S_2$. But in this case the three boundary components of $P$ are parallel to $K$, which is not possible. Then the annulus $A$ has $\partial_{11}$ as a boundary component. The other boundary component can not be $\partial_2 S_2$. It can not be $\partial_3 S_2$, for in this case $A$ and $P$ will intersect. Then the other boundary component of $A$ is $\partial_4 S_2$. In this case $A$ can be isotoped into $H_1$. So, after pushing $S_2$ into $H_1$ by using $E$, and then pushing $A$ into $H_1$, we get a configuration as in Case 1.

Finally, suppose that $t\subset T_2$ and $\sigma\subset H_1$. We note that $A_1$ is parallel to the companion annulus in $H_1$. Let $A$ be an annulus in $T_2$ with the same boundary as $A_1$. Then $E$ is a compression disk for the surface $(T_2-A)\cup A_1$, which is parallel to $T_1$, which is not possible.

\vspace{1cm}

{\bf{Case 6: $A_4.P_3.A_2.T_1'$}}.

\begin{figure}
\begin{center}
\includegraphics[scale=.46]{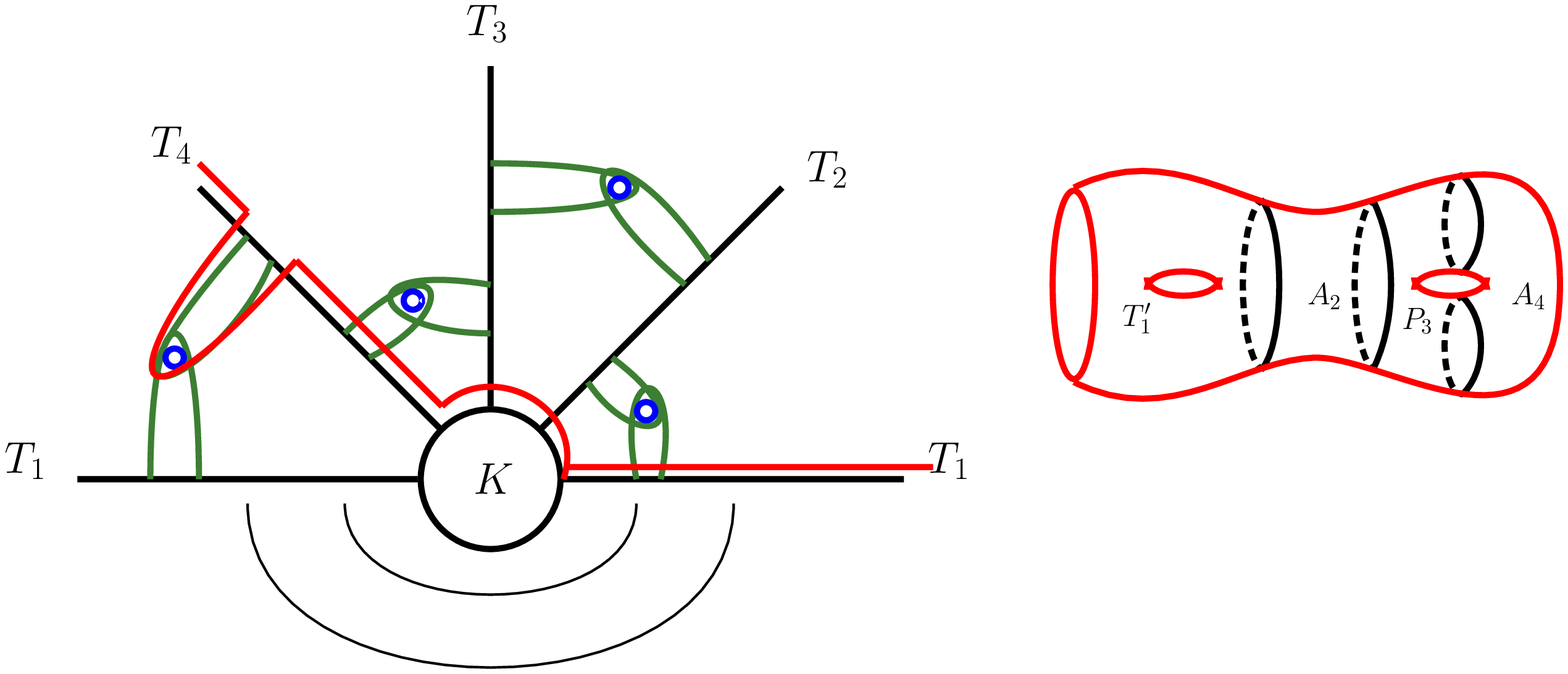}
\end{center}
\caption{}
\label{case6}
\end{figure}
This case is schematically shown in Figure \ref{case6}. Note that one boundary component of $T_1'$ lies in $J_1$, which is precisely $\partial \Sigma$, and the other lies in $T_2$ and it is parallel to $\partial T_2$. Led $D$, $E$, $t$ and $\sigma$ be as in Case 1.
Note that the cases where $t\subset T_4$ or $t\subset T_3$ can be eliminated as in Case $1$. The only remaining case is when $t \subset T_2$ and $\sigma \subset H_1$. In this case the arc $t$ joins points in the same boundary component of $T_1'$. By pushing $T_1'$ along the disk $E$, we get a new configuration for $\Sigma$, $\Sigma \cap H_3$ and $\Sigma \cap H_4$ are both a pair of pants, that is, we have  a configuration as in Case $1$, which is already solved. 

\end{proof}


\begin{teo}\label{main}
There exists an infinite family of genus one hyperbolic knots which are not almost-fibered. 
\end{teo}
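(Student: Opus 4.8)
The plan is to assemble the ingredients already established for the explicit family $K(\ell,m,n)$ and to show that a hypothetical almost-fibered structure would produce a circular decomposition of width $\{3\}$, which is forbidden by Theorem \ref{principal}. First I would fix the family: by Theorem \ref{familyknots} the knots $K(\ell,m,n)$ with $|\ell|,|m|,|n|\ge 3$ are genus one and carry four mutually disjoint, non-parallel genus one Seifert surfaces $T_1,\dots,T_4$ satisfying all the hypotheses used throughout Section \ref{width}, and by Lemma \ref{hyperbolic} each $K(\ell,m,n)$ is hyperbolic. To guarantee an \emph{infinite} family I would check that infinitely many of these knots are pairwise inequivalent, for instance by arranging the parameters so that an isotopy invariant (such as the hyperbolic volume or the Alexander polynomial) takes infinitely many values.

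Next I would record that none of these knots is fibered. A fibered knot has a unique incompressible Seifert surface, namely its fiber, whereas each $K(\ell,m,n)$ already possesses four non-isotopic incompressible genus one Seifert surfaces (in fact exactly four, by Lemma \ref{4S}). Hence $cw(E(K))\neq\emptyset$, so it suffices to rule out $|cw(E(K))|=1$.

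The heart of the assembly is a short width computation. Suppose, for contradiction, that some $K=K(\ell,m,n)$ is almost-fibered, so that $|cw(E(K))|=1$ and $cw(E(K))=\{c\}$ for a single integer $c$. In a circular thin position with one thick level, both the (unique) thin level and the thick level are Seifert surfaces for $K$. The thin level is chosen of smallest genus, hence is genus one; since the decomposition is nontrivial, the thick level is obtained by attaching at least one $1$-handle and is therefore a Seifert surface of genus at least two. Because a Seifert surface has a single boundary curve, capping it off gives a closed orientable surface of even Euler characteristic, so $c=2g-1$ is odd, and by the previous sentence $c\ge 3$. On the other hand, Theorem \ref{familywidth} exhibits a circular decomposition of $E(K)$ of width $\{3,3\}$, whence $cw(E(K))\le_{\mathrm{lex}}\{3,3\}$; comparing leading entries forces $c\le 3$. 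Thus $c=3$, i.e.\ $E(K)$ would admit a circular decomposition of width $\{3\}$, contradicting Theorem \ref{principal}. Therefore no $K(\ell,m,n)$ is almost-fibered.

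Combining these observations completes the proof: the infinite family $\{K(\ell,m,n)\}$ consists of hyperbolic genus one knots that are neither fibered nor almost-fibered. I expect the only genuinely delicate points in this assembly to be the complexity-and-parity bookkeeping that eliminates every single-thick-level width except $\{3\}$, and the verification that infinitely many parameter choices give non-equivalent knots; all of the substantive three-manifold topology has already been carried out in Theorem \ref{principal} and Lemma \ref{hyperbolic}.
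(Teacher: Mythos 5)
Your proposal is correct and follows essentially the same route as the paper: combine Theorem \ref{familyknots}, Theorem \ref{familywidth}, and Lemma \ref{hyperbolic}, and derive a contradiction with Theorem \ref{principal} if some $K(\ell,m,n)$ were almost-fibered. Your extra bookkeeping (the complexity bound $c=2g-1\ge 3$ for the thick level versus the lexicographic bound $c\le 3$ from the width-$\{3,3\}$ decomposition, and the remark that infinitely many parameter choices yield distinct knots) merely makes explicit two steps the paper leaves implicit, and both are verified correctly.
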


\begin{proof}
Let $K(\ell,m,n)$ be the family of knots constructed in Theorem \ref{familyknots}. These knots have genus one and by Theorem \ref{familywidth} their exterior admits a circular decomposition of width $\{3,3\}$. Also, by Lemma \ref{hyperbolic} the knots $K(\ell,m,n)$ are hyperbolic. If they are almost-fibered, then their exterior would admit a circular decomposition of width $\{3\}$ which is impossible by Theorem \ref{principal}.
\end{proof}

\vskip20pt
\textbf{Acknowledgements.}
The first author is partially supported by grant PAPIIT-UNAM IN116720.
The second author's research is supported by a CONACYT postdoctoral fellowship. 
The third author made this research on a sabbatical year at IMATE-UNAM, Unidad Juriquilla.

\end{document}